\theoremstyle{plain}
\newtheorem{theorem}{Theorem}
\newtheorem{proposition}[theorem]{Proposition}
\newtheorem{conjecture}[theorem]{Conjecture}
\newtheorem{lemma}[theorem]{Lemma}
\newtheorem{corollary}[theorem]{Corollary}
\newtheorem{observation}[theorem]{Observation}
\newcommand{\comment}[1]{\medskip\noindent\framebox{\parbox{\textwidth}{\smallskip{#1}\smallskip}}}
\newcommand{\figlabel}[1]{\label{fig:#1}}
\newcommand{\seclabel}[1]{\label{sec:#1}}
\newcommand{\spacing}[1]{\renewcommand{\baselinestretch}{#1}\setlength{\footnotesep}{\baselinestretch\footnotesep}}
\newcommand{\B}{\mathcal{B}}
\newcommand{\C}{\mathcal{C}}
\newcommand{\half}{\ensuremath{\protect\tfrac{1}{2}}}
\newcommand{\ceil}[1]{\ensuremath{\protect\lceil#1\rceil}}
\newcommand{\floor}[1]{\ensuremath{\protect\lfloor#1\rfloor}}
\DeclareMathOperator{\conv}{conv}
\begin{document}

\title[On the Connectivity of Visibility Graphs]{On the Connectivity of Visibility Graphs}

\date{\today}

\author[]{Michael~S.~Payne}
\address{Department of Mathematics and Statistics, 
The University of Melbourne
\newline Melbourne, Australia}
\email{m.payne3@pgrad.unimelb.edu.au}

\author[]{Attila P\'or}
\address{Department of Mathematics, 
 Western Kentucky University
\newline Bowling Green,  Kentucky, U.S.A.}
\email{attila.por@wku.edu}

\author[]{Pavel Valtr}
\address{Department of Applied Mathematics, 
Charles University
\newline Prague, Czech Republic}
\email{valtr@kam.mff.cuni.cz}

\author[]{David~R.~Wood}
\address{Department of Mathematics and Statistics, 
The University of Melbourne
\newline Melbourne, Australia}
\email{woodd@unimelb.edu.au}

\begin{abstract}
The visibility graph of a finite set of points in the plane has the points as vertices and an edge between two vertices if the line segment between them contains no other points. 
This paper establishes bounds on the edge- and vertex-connectivity of visibility graphs. 

Unless all its vertices are collinear, a visibility graph has diameter at most $2$, and so it follows by a result of Plesn\'ik (1975) that its edge-connectivity equals its minimum degree. 
We strengthen the result of Plesn\'ik by showing that for any two vertices $v$ and $w$ in a graph of diameter $2$, if $\deg(v) \leq \deg(w)$ then there exist $\deg(v)$ edge-disjoint $vw$-paths of length at most $4$.
Furthermore, we find that in visibility graphs every minimum edge cut is the set of edges incident to a vertex of minimum degree.

For vertex-connectivity, we prove that every visibility graph with $n$ vertices and at most $\ell$ collinear vertices has connectivity at least $\frac{n-1}{\ell-1}$, which is tight. 
We also prove the qualitatively stronger result that the vertex-connectivity is at least half the minimum degree.  
Finally, in the case that $\ell =4$ we improve this bound to two thirds of the minimum degree.
\end{abstract}

\thanks{David Wood is supported by a QEII Research Fellowship from the Australian Research Council.}

\thanks{Michael Payne is supported by an Australian Postgraduate Award from the Australian Federal Government.}

\thanks{This work began during the Special Semester on Discrete and Computational Geometry in 2010 at the \'Ecole Polytechnique F\'ed\'erale de Lausanne. We are grateful to the Centre Interfacultaire Bernoulli for hosting and the Swiss National Science Foundation for supporting this event.}

\subjclass[2000]{52C10 Erd\H os problems and related topics of discrete geometry, 05C40 Connectivity (of graphs), 05C10 Planar graphs; geometric and topological aspects of graph theory, 05C12 Distance in Graphs, 05C62 Graph representations (geometric and intersection representations, etc.).}

\maketitle

\section{Introduction}
\seclabel{intro}

Let $P$ be a finite set of points in the plane. Two distinct points
$v$ and $w$ in the plane are \emph{visible} with respect to $P$ if no
point in $P$ is in the open line segment $vw$. The
\emph{visibility graph} of $P$ has vertex set $P$, where two vertices are adjacent if and only if they are visible with respect to $P$. So the visibility graph is obtained by drawing lines
through each pair of points in $P$, where two points are adjacent if they
are consecutive on a such a line. Visibility graphs have 
interesting graph-theoretic properties. 
For example,  K\'ara, P\'or and Wood \cite{KPW} showed that $K_4$-free visibility graphs are $3$-colourable.
See \cite{pfender,KPW,porwood,matoudcg} for results and conjectures about the clique and chromatic number of visibility graphs. Further related results can be found in \cite{EmptyPentagon-GC,DPT09}.


The purpose of this  paper is to study the edge- and vertex-connectivity of visibility graphs.
A graph $G$ on at least $k+1$ vertices is $k$-vertex-connected ($k$-edge-connected) if $G$ remains connected whenever fewer than $k$ vertices (edges) are deleted. Menger's theorem says that this is equivalent to the existence of $k$ vertex-disjoint (edge-disjoint) paths between each pair of vertices. 
Let $\kappa(G)$ and $\lambda(G)$ denote the vertex- and
edge-connectivity of a graph $G$. 
Let $\delta(G)$ denote the minimum
degree of $G$. We have $\kappa(G)\leq \lambda(G)\leq \delta(G)$. For these and other basic results in graph theory see a standard text book such as \cite{diestel}.

If a visibility graph $G$ has $n$ vertices, at most $\ell$ of which are collinear, then $\delta(G) \geq \frac{n-1}{\ell-1}$. We will show that both edge- and vertex-connectivity are at least $\frac{n-1}{\ell-1}$ (Theorem \ref{nledge} and Corollary \ref{nlvert}). Since there are visibility graphs with $\delta = \frac{n-1}{\ell-1}$ these lower bounds are best possible.

We will refer to visibility graphs whose vertices are not all collinear as \emph{non-collinear visibility graphs}.
Non-collinear visibility graphs have diameter $2$ \cite{KPW}, and it is known that graphs of diameter $2$ have edge-connectivity equal to their minimum degree \cite{plesnik}.
We strengthen this result to show that if a graph has diameter $2$ then between any two vertices $v$ and $w$ with $\deg(v) \leq \deg(w)$, there are $\deg(v)$ edge-disjoint paths of length at most $4$ (Theorem \ref{4paths}).
We also characterise minimum edge cuts in visibility graphs as the sets of edges incident to a vertex of minimum degree (Theorem \ref{edgecut}).

With regard to vertex-connectivity, our main result is that $\kappa\geq \frac{\delta}{2} +1$ for all non-collinear visibility graphs (Theorem \ref{halfdelta}). 
This bound is qualitatively stronger than the bound $\kappa \geq \frac{n-1}{\ell-1}$ since it is always within a factor of $2$ of being optimal.
In the special case of at most four collinear points, we improve this bound to $\kappa \geq \frac{2\delta+1}{3}$ (Theorem \ref{4line}). 
We conjecture that $\kappa \geq \frac{2\delta+1}{3}$ for all visibility graphs. This bound would be best possible since, for each integer $k$, there is a visibility graph with a vertex cut of size $2k+1$, but minimum degree $\delta = 3k+1$. Therefore the vertex-connectivity is at most $2k+1 =\frac{2\delta+1}{3}$. Figure~\ref{twothirds} shows the case $k=4$.

\begin{figure}[!h]
\includegraphics{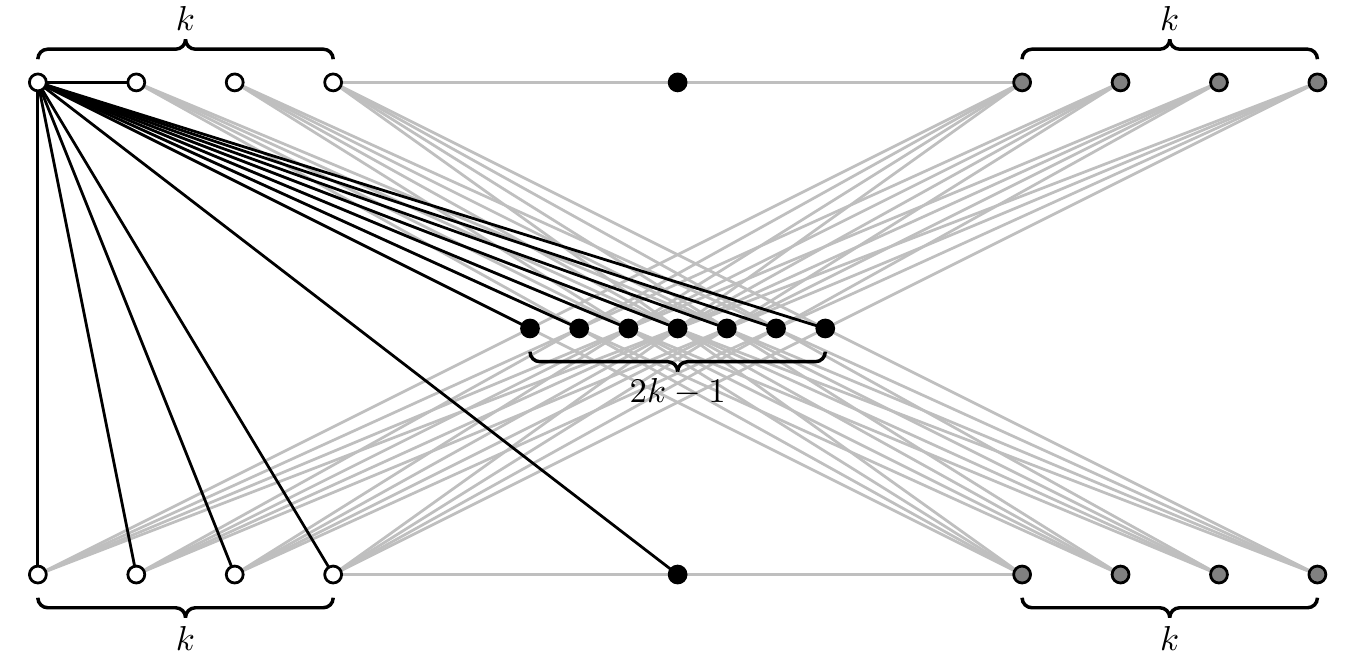}
\caption{\label{twothirds}A visibility graph with vertex-connectivity
$\frac{2\delta+1}{3}$. The black vertices are a cut set. The minimum
degree $\delta = 3k+1$ is achieved, for example, at the top left vertex. Not all edges are drawn.}
\end{figure}

A central tool in this paper, which is of independent interest, is a kind of bipartite visibility graph. 
Let $A$ and $B$ be disjoint sets of points in the plane. 
The \emph{bivisibility graph} $\B(A,B)$ of $A$ and $B$ has vertex set $A\cup B$,
where points $v\in A$ and $w\in B$ are adjacent if and only if they are
visible with respect to $A\cup B$. 
The following simple observation is used several times in this paper.

\begin{observation}\label{obs}
Let $G$ be a visibility graph. Let $\{A,B,C\}$ be a partition of $V(G)$ such that $C$ separates $A$ and $B$.
If $\B(A,B)$ contains $t$ pairwise non-crossing edges, then $\kappa(G) \geq |C| \geq t$
since there must be a distinct vertex in $C$ on each such edge.
%
%
\end{observation}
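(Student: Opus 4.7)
The approach is an elementary piercing argument. I would enumerate the $t$ pairwise non-crossing edges of $\B(A,B)$ as $u_1v_1,\dots,u_tv_t$ with $u_i\in A$ and $v_i\in B$, and then produce an injection from this list into $C$ by showing that each segment $u_iv_i$ contains a distinct vertex of $C$.

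For existence of a piercing vertex, I would use the separation hypothesis: since $C$ separates $A$ from $B$ in $G$, the pair $u_i,v_i$ is non-adjacent in $G$, so some vertex of $V(G)=A\cup B\cup C$ lies in the open segment $u_iv_i$. On the other hand, $u_iv_i$ is an edge of $\B(A,B)$, so by definition no vertex of $A\cup B$ lies in its open interior. Combining these facts, the piercing vertex must belong to $C$; pick one such $c_i\in C$ for each $i$.

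For injectivity, I would note that each $c_i$ lies in the \emph{interior} of $u_iv_i$. If $c_i=c_j$ for some $i\neq j$, then this common point would be an interior point of both segments, forcing them to cross and contradicting the non-crossing hypothesis. (A collinear overlap is impossible in the first place, since two segments of $\B(A,B)$ sharing interior points on a line would each contain a vertex of $A\cup B$ from the other segment, violating bivisibility.) Hence $|C|\geq t$, and applying the argument to a minimum separator of $A$ from $B$ yields the stated chain $\kappa(G)\geq |C|\geq t$.

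There is no substantive obstacle here; the only point requiring care is the distinctness of the $c_i$, which hinges on the standard convention that non-crossing segments have disjoint interiors, together with the observation above that this convention is consistent with the structure of $\B(A,B)$-edges on a common line.
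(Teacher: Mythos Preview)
Your argument is correct and is precisely the reasoning the paper intends; the paper states this as an Observation whose entire proof is the clause ``since there must be a distinct vertex in $C$ on each such edge.'' Your treatment of the collinear-overlap case and your reading of the chain $\kappa(G)\geq |C|\geq t$ (by taking $C$ to be a minimum separator) simply spell out what the paper leaves implicit.
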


Finally, one lemma in particular stands out as being of independent interest. Lemma \ref{atlem} says that for any two properly coloured non-crossing geometric graphs that are separated by a line, there exists an edge joining them such that the union is a properly coloured non-crossing geometric graph. 

\section{Edge Connectivity}

Non-collinear visibility graphs have diameter at most $2$ \cite{KPW}. 
This is because even if two points cannot see each other, they can both see the point closest to the line containing them. 
Plesn\'ik \cite{plesnik} proved that the edge-connectivity of a graph with diameter at most $2$ equals its minimum degree. 
Thus the edge-connectivity of a non-collinear visibility graph equals its minimum degree. 
There are several other known conditions that imply that the edge-connectivity of a graph is equal to the minimum degree; see for example \cite{volkmann, dankelvolk, pleznam}.
Here we prove the following
strengthening of the result of Plesn\'ik. 

\begin{theorem}\label{4paths}
Let $G$ be a graph with diameter $2$.
Let $v$ and $w$ be distinct vertices in $G$.
Let $d:=\min\{\deg(v),\deg(w)\}$. 
Then there are $d$ edge-disjoint paths of length at most $4$ between $v$ and $w$ in $G$.
\end{theorem}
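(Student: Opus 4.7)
The plan is to reduce the construction of the $d$ paths to a bipartite matching problem via Hall's theorem, then repair edge-disjointness for the length-$4$ paths separately. Assume without loss of generality that $\deg(v)\le\deg(w)=d$; the edge $vw$, if present, contributes one of the $d$ paths, so it suffices to produce $d-\epsilon$ more, where $\epsilon=1$ if $vw\in E(G)$ and $\epsilon=0$ otherwise. Every other $vw$-path of length at most $4$ has the form $v,x,\ldots,y,w$ for some $x\in X:=N(v)\setminus\{w\}$ and $y\in Y:=N(w)\setminus\{v\}$, with a middle segment inside $G-\{v,w\}$ of length $0$ (the case $x=y\in C:=N(v)\cap N(w)$), length $1$ (the case $xy\in E(G)$), or length $2$ via some $u\notin\{v,w,x,y\}$ with $xu,uy\in E(G)$.

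I would form a bipartite graph $H$ on disjointified copies of $X$ and $Y$, joining $x$ to $y$ whenever one of these three middle-segment types realises the pair; in particular each $c\in C$ gives a "self-edge" between its two copies. The crucial observation is that if $x\in X$ and $y\in Y$ share no $H$-edge, then by diameter $2$ they have a common neighbour in $G$, which cannot lie in $V(G)\setminus\{v,w,x,y\}$, so it must be $v$ (forcing $y\in C$) or $w$ (forcing $x\in C$). Translating this through K\"onig's theorem, any vertex cover $A\cup B$ of $H$ (with $A\subseteq X$ and $B\subseteq Y$) must satisfy $X\setminus A\subseteq C$ or $Y\setminus B\subseteq C$, and in either case the self-edges over $C$ together with the bound $|Y|\ge|X|$ force $|A|+|B|\ge|X|$. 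Hence there is a matching $\phi$ saturating $X$, which I would further normalise so that $\phi(c)=c$ for every $c\in C$, using standard alternating-path swaps; this prevents two length-$3$ paths from ever sharing a middle edge of the form $\{c,c'\}$.

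Reading off the candidate paths from $\phi$, the first and last edges are automatically edge-disjoint across $x$ by injectivity of $\phi$, and under the normalisation the endpoints of every length-$3$ middle edge and the $x$-side endpoint of every length-$4$ path lie outside $C$, which rules out edge collisions between length-$\le 3$ paths and any length-$4$ path. The main obstacle is choosing, for each length-$4$ pair $(x,\phi(x))$, an intermediate vertex $u_x$ so that no two length-$4$ paths share an edge $\{x,u_x\}$ or $\{u_x,\phi(x)\}$; the only collision pattern that survives the normalisation is a swap $u_{x_1}=x_2$, $u_{x_2}=x_1$ between two length-$4$ pairs, causing both paths to use the single edge $\{x_1,x_2\}$. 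I would resolve this by a second application of Hall's theorem to a bipartite graph matching each length-$4$ pair to its set of valid intermediates in $G-\{v,w\}$, whose non-emptiness is guaranteed by diameter $2$; a failure of Hall's condition there would, by invoking diameter $2$ once more, force a common neighbour outside the forbidden set and contradict the violation. This edge-disjointness bookkeeping is where I expect the bulk of the technical work to lie.
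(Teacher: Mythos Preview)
Your framework is sound and your key observation---that if $x\in X\setminus C$ and $y\in Y\setminus C$ are not $H$-adjacent then diameter~$2$ forces one of them into $C$---is exactly right. But notice what it actually says: $H$ restricted to $(X\setminus C)\times(Y\setminus C)$ is the \emph{complete} bipartite graph. Hence the K\"onig argument is unnecessary, any injection $\phi:X\setminus C\to Y\setminus C$ serves, and the normalisation $\phi(c)=c$ comes for free. At that point you have essentially reached the paper's setup: use all of $C$ for length-$2$ paths and pair $X\setminus C$ with an equal-sized subset of $Y\setminus C$.

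The genuine gap is the length-$4$ step. First, your collision inventory is incomplete: besides $u_{1}=x_{2},\,u_{2}=x_{1}$, the patterns $u_{1}=y_{2},\,u_{2}=y_{1}$ and the mixed cases $u_{1}=x_{2},\,u_{2}=y_{1}$ (and its mirror) also create a shared middle edge. More seriously, the ``second Hall'' you sketch does not resolve these. Read as a system of distinct representatives for the intermediates, Hall's condition can hold while the chosen paths still collide; read as choosing intermediates outside the set of length-$4$ endpoints, Hall's condition can outright fail for your $\phi$. Concretely, take $\phi(x_{2})=y_{2}$, $\phi(x_{3})=y_{3}$ with $x_{2}y_{2},x_{3}y_{3}\notin E(G)$ but $x_{2}x_{3},x_{2}y_{3},x_{3}y_{2}\in E(G)$, and suppose $x_{3}$ (respectively $x_{2}$) is the \emph{only} common neighbour of $x_{2},y_{2}$ (respectively $x_{3},y_{3}$) in $G-\{v,w\}$; then you are forced into the swap and no amount of bookkeeping on this fixed $\phi$ helps. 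The missing idea, which is the paper's key device, is to choose $\phi$ on $X\setminus C$ so that the length-$3$ pairs form a \emph{maximal} matching in $G$ between $X\setminus C$ and the chosen subset of $Y\setminus C$. Maximality guarantees there is no $G$-edge between the length-$4$ $x$'s and the length-$4$ $y$'s, so every common neighbour of a length-$4$ pair automatically lies outside both sets, and all four collision patterns vanish at once---no second matching argument is needed.
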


\begin{proof}
First suppose that $v$ and $w$ are not adjacent.
Let $C$ be the set of common neighbours of $v$ and $w$.
For each vertex $c\in C$, take the path $(v,c,w)$.
Let $A$ be a set of $d-|C|$ neighbours of $v$ not in $C$.
Let $B$ be a set of $d-|C|$ neighbours of $w$ not in $C$.
Let $M_1$ be a maximal matching in the bipartite subgraph of $G$ induced by $A$ and $B$.
Call these matched vertices $A_1$ and $B_1$.
For each edge $ab\in M_1$, take the path $(v,a,b,w)$.
Let $A_2$ and $B_2$ respectively be the subsets of $A$ and $B$ consisting of the unmatched vertices.
Let $D:=V(G) \setminus (A_2 \cup B_2 \cup \{v,w\})$.
Let $M_2$ be an arbitrary pairing of vertices in $A_2$ and $B_2$.
For each pair $ab \in M_2$, take the path $(v,a,x,b,w)$, where $x$ is a common neighbour of $a$ and $b$ (which exists since $G$ has diameter $2$). 
Since $x$ is adjacent to $a$, $x\neq w$, and by the maximality of $M_1$, $x\not\in B_2$.
Similarly, $x\neq v$ and $x\not\in A_2$, and so $x\in D$.
Thus there are three types of paths, namely $(v,C,w)$, $(v,A_1,B_1,w)$, and $(v,A_2,D, B_2,w)$.
Paths within each type are edge-disjoint.
Even though $D$ contains $A_1$ and $B_1$, edges between each pair of sets from $\{A_1,B_1,A_2,B_2,C,D,\{v\},\{w\}\}$ occur in at most one of the types, and all edges are between distinct sets from this collection.
Hence no edge is used twice, so all the paths are edge-disjoint. 
The total number of paths is $|C|+|A_1|+|A_2|=d$. 
This finishes the proof if $v$ and $w$ are not adjacent. If $G$ does contain the edge $vw$ then take this as the first path, then remove it and find $d-1$ paths in the same way as above. 
\end{proof}

\begin{corollary}
Let $G$ be a non-collinear visibility graph. 
Then the edge-connectivity of $G$ equals its minimum
degree. Moreover, for distinct vertices $v$ and $w$, there
are $\min\{\deg(v),\deg(w)\}$ edge-disjoint paths of length at most $4$ between $v$ and $w$ in $G$.
\end{corollary}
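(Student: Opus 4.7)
The corollary is essentially immediate from \thmref{4paths} combined with the diameter-2 property of non-collinear visibility graphs. The plan is to simply verify that the hypotheses of \thmref{4paths} are satisfied and then read off both conclusions.

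First I would observe that $G$ has diameter at most $2$; this is the result of K\'ara, P\'or and Wood cited in the paper (any two mutually invisible points both see the point in $P$ closest to the line through them, which lies strictly between them). Hence \thmref{4paths} applies to $G$, and for any distinct vertices $v,w$ there exist $\min\{\deg(v),\deg(w)\}$ edge-disjoint $vw$-paths of length at most $4$. This gives the ``moreover'' clause directly.

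For the edge-connectivity statement, I would note that for every pair of distinct vertices $v,w$ the existence of $\min\{\deg(v),\deg(w)\} \geq \delta(G)$ edge-disjoint $vw$-paths implies, by Menger's theorem, that $\lambda(G) \geq \delta(G)$. Combined with the trivial bound $\lambda(G) \leq \delta(G)$, we get equality.

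There is no real obstacle here: the work has already been done in \thmref{4paths} and in the cited diameter bound; the corollary is just the specialisation to the visibility-graph setting.
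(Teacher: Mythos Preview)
Your proposal is correct and matches the paper's (implicit) approach: the corollary is stated without proof precisely because it follows immediately from the diameter-$2$ fact plus Theorem~\ref{4paths}. One small slip: in your parenthetical, the common neighbour is the point of $P$ closest to the line $vw$ but \emph{off} that line, not a point lying strictly between $v$ and $w$; this does not affect the argument since you are only citing the diameter bound.
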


We now show that not only is the edge connectivity as high as possible, but it is realised by paths with at most one bend in them.

\begin{theorem}\label{nledge}
Let $G$ be a visibility graph with $n$ vertices, at most $\ell$ of which are collinear. 
Then $G$ is $\ceil{\frac{n-1}{\ell-1}}$-edge-connected, which is best possible.
Moreover, between each pair of vertices, there are 
$\ceil{\frac{n-1}{\ell-1}}$ edge-disjoint 1-bend paths. 
\end{theorem}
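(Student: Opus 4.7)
My plan is to build $k := \ceil{(n-1)/(\ell-1)}$ edge-disjoint $1$-bend paths between any two distinct vertices $v, w$ by combining the path along the line $L^* := \overline{vw}$ with $k - 1$ further paths produced by a bipartite matching. The path $P_0$ visits the vertices of $P$ on $L^*$ in order from $v$ to $w$; consecutive collinear points are visible, so this is a valid $0$-bend path using only edges supported on $L^*$.

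Let $\mathcal{L}_v$ denote the set of lines through $v$ other than $L^*$ containing at least one other vertex, and $\mathcal{L}_w$ the analogous set at $w$. Form the bipartite graph $H$ on $\mathcal{L}_v \cup \mathcal{L}_w$ by joining $L \in \mathcal{L}_v$ to $M \in \mathcal{L}_w$ precisely when $L \cap M \in P$. The edges of $H$ correspond bijectively to the vertices of $P$ off $L^*$, via $u \mapsto (\overline{vu}, \overline{uw})$; moreover, each line in $\mathcal{L}_v \cup \mathcal{L}_w$ contains at most $\ell - 1$ off-$L^*$ vertices, since its $\leq \ell$ collinear vertices include $v$ or $w$. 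The key step, which I expect to be the main obstacle, is to prove that $H$ admits a matching of size $k - 1$. If not, K\"onig's theorem produces a vertex cover of size at most $k - 2$, and the $\geq n - \ell$ off-$L^*$ vertices must all lie on its lines; this forces $(k-2)(\ell-1) \geq n - \ell$, equivalently $k - 1 \geq (n-1)/(\ell-1)$, which contradicts $k = \ceil{(n-1)/(\ell-1)}$.

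Given the matching $\{(L_i, M_i) : 1 \leq i \leq k-1\}$ with bend vertices $u_i := L_i \cap M_i$, define $P_i$ by walking from $v$ to $u_i$ along $L_i$ through consecutive visible vertices on $L_i$, then from $u_i$ to $w$ analogously along $M_i$; each $P_i$ is a $1$-bend path. The paths $P_0, P_1, \ldots, P_{k-1}$ are pairwise edge-disjoint because each edge of $G$ lies on a unique line and the lines $L^*, L_1, \ldots, L_{k-1}, M_1, \ldots, M_{k-1}$ are pairwise distinct: the $L_i$'s (respectively $M_i$'s) are distinct by the matching, while $L_i = M_j$ would force this line to pass through both $v$ and $w$ and hence equal $L^*$, which is excluded from $\mathcal{L}_v$ and $\mathcal{L}_w$. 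Once the matching step is secured, the rest is straightforward verification. The best-possibility claim reduces to exhibiting a visibility graph (e.g.\ a grid-like configuration) in which some vertex attains degree exactly $\ceil{(n-1)/(\ell-1)}$, forcing the edge-connectivity to be no larger.
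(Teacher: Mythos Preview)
Your proposal is correct and follows essentially the same approach as the paper: build the bipartite auxiliary graph $H$ on the two pencils of lines through $v$ and $w$, use K\"onig to extract a large enough matching, take the corresponding bend points, and add the straight $v$--$w$ path along $L^*$. The only cosmetic difference is that the paper invokes K\"onig's edge-colouring theorem (max degree $\le \ell-1 \Rightarrow$ a colour class of size $\ge m/(\ell-1)$), whereas you invoke the min-vertex-cover/max-matching form; both yield the same matching bound and the rest of the argument is identical.
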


\begin{proof}
Let $v$ and $w$ be distinct vertices of $G$. 
Let $V^*$ be the set of vertices of $G$ not on the line $vw$.
Let $m := |V^*|$.
Thus $m \geq  n - \ell$.

Let $\mathcal{L}$ be the pencil of lines through $v$ and the vertices in $V^*$.
Let $\mathcal{M}$ be the pencil of lines through $w$ and the vertices in $V^*$.
Let $H$ be the bipartite graph with vertex set
$\mathcal{L}\cup\mathcal{M}$, where $L\in\mathcal{L}$ is adjacent to
$M\in\mathcal{M}$ if and only if $L\cap M$ is a vertex in $V^*$.

Thus $H$ has $m$ edges, and maximum degree at most $\ell-1$.
Hence, by K\"onig's theorem \cite{Konig1916}, $H$ is $(\ell-1)$-edge-colourable.
Thus $H$ contains a matching of at least $\frac{m}{\ell-1}$ edges.
This matching corresponds to a set $S$ of at least $\frac{m}{\ell-1}$ vertices in $V^*$,
no two of which are collinear with $v$ or $w$.

For each vertex $x\in S$, take the path in the visibility graph from
$v$ straight to $x$ and then straight to $w$.
These paths are edge-disjoint.
Adding the path straight from $v$ to $w$, we get at least $\frac{m}{\ell-1} +1$ paths,
which is at least  $\frac{n-1}{\ell-1}$.
Figure~\ref{lbound} shows that this bound is best possible.
\end{proof}

\begin{figure}
\includegraphics{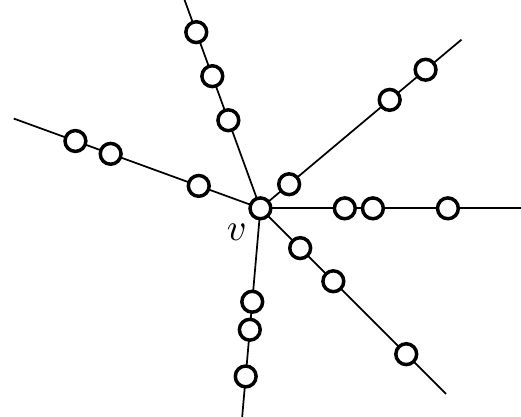}
\caption{\label{lbound}If each ray from $v$ through $V(G)$ contains $\ell$ vertices, the degree of $v$ is $\frac{n-1}{\ell -1}$.}
\end{figure}

We now prove that minimum sized edge cuts in non-collinear visibility graphs are only found around a vertex. To do this, we first characterise the diameter $2$ graphs for which it does not hold.

\begin{proposition}\label{deltacut} 
Let $G$ be a graph with diameter at most $2$ and minimum degree $\delta \geq 2$.
Then $G$ has an edge cut of size $\delta$ that is not the set of edges incident to a single vertex if and only if $V(G)$ can be partitioned into $A\cup B\cup C$ such that:
\begin{itemize}
\item $G[A]\cong K_\delta$ and $|B\cup C|\geq \delta$,
\item each vertex in $A$ has exactly one neighbour in $B$ and no neighbours in $C$,
\item each vertex in $B$ has at least one neighbour in $A$, and
\item each vertex in $B$ is adjacent to each vertex in $C$.
\end{itemize}
\end{proposition}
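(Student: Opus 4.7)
I would prove the two directions separately. The reverse direction is quicker: given a partition $A\cup B\cup C$ with the stated properties, each vertex of $A$ has exactly one neighbour in $B$ and none in $C$, so $E(A, B\cup C)$ is an edge cut of size $|A|=\delta$. To verify this cut is not the set of edges at any vertex $v$, I would case on where $v$ lies. If $v\in A$, then $v$ contributes only one cut edge; if $v\in B$ and every cut edge were incident to $v$, then $v$ would be adjacent to all of $A$ and $B=\{v\}$, so $|B\cup C|\geq\delta\geq 2$ forces $C\neq\emptyset$, but then condition 4 gives $v$ edges to $C$ that lie outside the cut; if $v\in C$, then $v$ has no neighbour in $A$ and hence no cut edge.

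For the forward direction, let $F=E(X,Y)$ be an edge cut of size $\delta$ that is not the edge set of any single vertex, so $|X|,|Y|\geq 2$. My first step would be to show $\min(|X|,|Y|)\leq\delta$: if both were larger than $\delta$, then since $F$ has only $\delta$ edges, each side would contain a vertex with no neighbour across the cut, and two such vertices would sit at distance at least $3$, violating the diameter-$2$ hypothesis. I would relabel so that $A:=X$ satisfies $|A|\leq\delta$, and set $B^*:=Y$. Each $a\in A$ has $\deg_A(a)\leq |A|-1$ and $\deg(a)\geq\delta$, so $\deg_{B^*}(a)\geq\delta-|A|+1$. Summing yields
\[
\delta \;=\; \sum_{a\in A}\deg_{B^*}(a) \;\geq\; |A|\bigl(\delta-|A|+1\bigr),
\]
which (using $|A|\geq 2$) rearranges to $|A|\geq\delta$. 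Hence $|A|=\delta$, and equality throughout forces $\deg_A(a)=\delta-1$ and $\deg_{B^*}(a)=1$ for every $a\in A$; in particular $G[A]\cong K_\delta$.

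I would then set $B:=\{v\in B^*:v\text{ has a neighbour in }A\}$ and $C:=B^*\setminus B$. Conditions 2 and 3 are immediate from the definitions and the previous step. For condition 4, given $b\in B$ and $c\in C$, I would pick a neighbour $a$ of $b$ in $A$; since $N(a)=(A\setminus\{a\})\cup\{b\}$ and $c$ has no neighbour in $A$, diameter $2$ forces $b$ to be a common neighbour of $a$ and $c$, so $b$ and $c$ are adjacent. The main obstacle is the remaining count $|B\cup C|\geq\delta$, which I expect to handle by case analysis. If $C\neq\emptyset$, then any $c\in C$ has all $\geq\delta$ of its neighbours in $B^*\setminus\{c\}$, so $|B^*|\geq\delta+1$. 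If $C=\emptyset$ and $|B|<\delta$, the same counting identity applied to $B$ yields $\delta=\sum_{b\in B}\deg_A(b)\geq|B|(\delta-|B|+1)$, contradicting $|B|<\delta$ when $|B|\geq 2$; and $|B|=1$ would make the lone vertex of $B$ adjacent to all of $A$, forcing $F$ to equal the edge set of that vertex, contrary to hypothesis.
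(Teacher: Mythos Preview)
Your proof is correct and follows essentially the same approach as the paper: the counting inequality $\delta\ge |X|(\delta-|X|+1)$ to force $|X|=\delta$ and $G[X]\cong K_\delta$, and the diameter-$2$ condition to force adjacency between $B$ and $C$.

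The one organizational difference worth noting is that the paper applies the counting inequality symmetrically to \emph{both} sides at the outset, obtaining $|X|\ge\delta$ and $|Y|\ge\delta$ immediately; it then uses diameter~$2$ only to decide which side becomes $A$ (the side on which every vertex has a neighbour across). This yields $|B\cup C|=|Y|\ge\delta$ for free and makes your initial step ($\min(|X|,|Y|)\le\delta$ via a diameter argument) and your final case analysis for $|B\cup C|\ge\delta$ unnecessary. Your route is a little longer but perfectly sound; your reverse-direction case analysis is more detailed than the paper's one-line treatment.
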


\begin{proof}
If $G$ has the listed properties then the edges between $A$ and $B$ form a cut of size $\delta$ that is not the set of edges incident to a single vertex.

Conversely, suppose an edge cut of size $\delta$ separates the vertices of $G$ into two sets $X$ and $Y$ with $|X|>1$ and $|Y| >1$.
Each vertex of $X$ is incident to at least $\delta-(|X|-1)$ edges of the cut. 
It follows that $\delta \geq |X|(\delta-(|X|-1))$. 
Consequently, $|X|(|X|-1)\geq \delta(|X|-1)$ and thus $|X|\geq \delta$.
Analogously, $|Y|\geq \delta$.
Since $G$ has diameter $2$, there are no vertices $x \in X$ and $y \in Y$, such that
 all the neighbours of $x$ are in $X$, and all the neighbours of $y$ are in $Y$.
Thus we may assume without loss of generality that all vertices in $X$ have a neighbour in $Y$.
Since there are only $\delta$ edges between $X$ and $Y$, $|X|=\delta$ and each vertex in $X$ has exactly one neighbour in $Y$.
The minimum degree condition implies that all edges among $X$ are present.
Let $A:=X$, $B:= \bigcup_{x\in X}\{N(x) \setminus X \} $ and $C:= V(G) \setminus (A \cup B)$.
If there is a vertex $c \in C$ then $c$ must be joined to all vertices in $B$, otherwise there would be a vertex in $A$ at distance greater than $2$ from $c$. 
\end{proof}

We now prove that diameter $2$ graphs such as those described in Proposition~\ref{deltacut} cannot be visibility graphs. 

\begin{theorem}\label{edgecut} Every minimum edge-cut in a non-collinear visibility graph is the set of edges incident to some vertex.
\end{theorem}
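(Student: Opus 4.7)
My plan is to argue by contradiction. Suppose some minimum edge-cut of $G$ is not the set of edges incident to any single vertex. Since $G$ is a non-collinear visibility graph, it has diameter at most $2$, so Proposition \ref{deltacut} applies and yields a partition $V(G) = A \cup B \cup C$ satisfying the four listed properties, with $|A|=\delta$ and $A$ a visibility clique. The task is to show that such a configuration cannot be realised geometrically.

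I would first record the key geometric consequences of the hypothesis. $A$ contains no three collinear points, because the middle point would block the outer two from seeing each other, contradicting $A$ being a clique; hence, if $\delta \geq 3$ then $A$ is in strictly convex position. Next, for any $a \in A$ and any $c \in C$, the segment $ac$ must contain a point of $V(G)$ strictly between them, and since $a$ sees no vertex of $C$ and $c$ sees no vertex of $A$, the $V(G)$-point on $ac$ closest to $a$ lies in $A \cup B$, while the one closest to $c$ lies in $B \cup C$. Iterating this analysis, the sequence of $V(G)$-points on the line $ac$ must begin with a run in $A$, followed by the unique $B$-match $b_{a^*}$ of the last such $A$-vertex $a^*$, and then further vertices of $B \cup C$ on the way to $c$.

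I would first dispatch the sub-case $C = \emptyset$. A degree count then gives $|B| = \delta$, a perfect matching $a \leftrightarrow b_a$ between $A$ and $B$ in the visibility graph, and $B$ is itself a visibility clique (each $b \in B$ needs degree at least $\delta$ but has at most one neighbour in $A$ and none in $C$). For each pair $(a_i, b_j)$ with $i \neq j$, the blocker of the segment $a_i b_j$ lies in $(A \cup B) \setminus \{a_i, b_j\}$; combining the constraints arising simultaneously from $(a_i, b_j)$ and $(a_j, b_i)$, the forced collinearities chain together so that all $2\delta$ points lie on a single line, contradicting non-collinearity. The case $\delta = 2$ is the model: any choice of blockers for the two non-edges $a_1 b_2$ and $a_2 b_1$ from $\{a_2, b_1\}$ and $\{a_1, b_2\}$ respectively forces the four points onto a common line.

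The remaining case $C \neq \emptyset$ is the main obstacle, and this is where I expect the real work to lie. I would feed the line-structure observation into a global argument: using the convex position of $A$ (when $\delta \geq 3$) and the fact that each $b \in B$ sees every $c \in C$, I would show that no self-consistent assignment of blockers to the $A$-to-$(B \cup C)$ non-edges can exist without either exceeding the allowed number of collinear points in $V(G)$ or producing a visibility edge between $A$ and $C$, which the partition forbids. The delicate step will be controlling how blockers of different segments $ac$ interact through shared $B$-vertices (each of which is the $B$-neighbour of exactly one $a$), probably via an extremal-point argument on the convex hull of $A$ together with the earlier claim that the first interior $V(G)$-point on any such segment must lie in $A \cup B$.
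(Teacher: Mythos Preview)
Your proposal is not a proof but an outline with real gaps. The $C=\emptyset$ sub-case is argued only for $\delta=2$; the claim that for larger $\delta$ ``the forced collinearities chain together so that all $2\delta$ points lie on a single line'' is asserted without justification, and it is unclear how any chaining works once each non-edge $a_ib_j$ has $2\delta-2$ candidate blockers rather than two. More seriously, for the main case $C\ne\emptyset$ you offer only a programme (``no self-consistent assignment of blockers \dots can exist'') with no concrete mechanism; the observation that the first interior point on a segment $ac$ lies in $A\cup B$ is correct but does not by itself give enough control to force a contradiction.

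The paper's argument avoids blocker-tracking altogether. Pick $a\in A$ and draw the pencil of rays from $a$ through all other vertices. Since $\deg(a)=\delta$ there are exactly $\delta$ rays, and all but one carry a point of $A\setminus\{a\}$. The key geometric observation is that any vertex on a ray can see the point of $A\setminus\{a\}$ on a \emph{neighbouring} ray (one bounding an adjacent empty sector of angle less than~$\pi$). When $a$ lies in the interior of $\conv(V(G))$, every ray has two neighbours, so every vertex of $B\cup C$ sees some point of $A\setminus\{a\}$; this forces $C=\emptyset$ and yields at least $\delta+1$ edges between $A$ and $B$, a contradiction. When no such $a$ exists, $A$ is in strictly convex position on the boundary of $\conv(V(G))$, and a short analysis of which rays extend sides versus diagonals of $\conv(A)$ finishes the job. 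A single idea --- visibility across neighbouring rays --- replaces the blocker bookkeeping you were setting up.
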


\begin{proof}
Let $G$ be a non-collinear visibility graph. Suppose for the sake of contradiction that $G$ has an edge cut of $\delta(G)$ edges that are not all incident to a single vertex.
Since $G$ is non-collinear, $\delta \geq 2$.
By Proposition~\ref{deltacut}, $V(G)$ can be partitioned into $A \cup B \cup C$ with $|A|=\delta$, $|B \cup C| \geq \delta$, and $\delta$ edges between $A$ and $B$. 
Furthermore, the vertices in $A$ can pairwise see each other and each vertex in $A$ has precisely one neighbour in $B$. 

Choose any $a \in A$ and draw the pencil of $\delta$ rays from $a$ to all other vertices of the graph.
All rays except one contain a point in $A\setminus\{a\}$.
Say two rays are \emph{neighbours} if they bound a sector of angle less than $\pi$ with no other ray inside it. Observe that every ray has at least one neighbour.

First suppose $a$ is in the interior of the convex hull of $V(G)$, as in Figure~\ref{minedgecut}(a).
Then every ray has two neighbours, so each point in $B \cup C$ can see at least one point of $A \setminus \{a\}$ on a neighbouring ray. 
Hence $C$ is empty and $|B|\geq \delta$. 
Along with the edge from $a$ to its neighbour in $B$ we have at least $\delta +1$ edges between $A$ and $B$, a contradiction.

If we cannot choose $a$ in the interior of $\conv(V(G))$, then $A$ is in strictly convex position because no three points of $A$ are collinear.
Let the rays from $a$ containing another point from $A$ be called \emph{$A$-rays}.
The $A$-rays are all extensions of diagonals or edges of $\conv(A)$. 
There is one more ray $r$ that contains only points of $B\cup C$. 
In fact, $r$ has only one point $b$ on it, since all of $r$ is visible from the point in $A$ on a neighbouring ray. Furthermore, the rays which extend diagonals of $\conv(A)$ contain no points of $B\cup C$ since $A$ lies in the boundary of $\conv(V(G))$.
Hence the rest of $B \cup C$ must lie in the two rays which extend the sides of $\conv(A)$.
If these rays both have a neighbouring $A$-ray, then we can argue as before and find $\delta +1$ edges between $A$ and $B$.
We are left with the case where some $A$-ray has $r$ as its only neighbour.
If $b$ lies outside $\conv(A)$ and $\delta >2$ (Figure~\ref{minedgecut}(b)), then we can change our choice of $a$ to a point $a'$ on a ray neighbouring $r$, and then we are back to the previous case. 
(If $\delta =2$ then the other point of $A$ will see $b$ so there can be no more points of $B\cup C$).
Otherwise $b$ is the only point in the interior of $\conv(A)$ (Figure~\ref{minedgecut}(c)), and is therefore the only point in $B$ since it sees all of $A$.
In this case $C$ must be empty since $b$ blocks $c$ from at most one point in $A$.
Thus $|B\cup C| =1$, a contradiction.
\end{proof}

\begin{figure}
\includegraphics{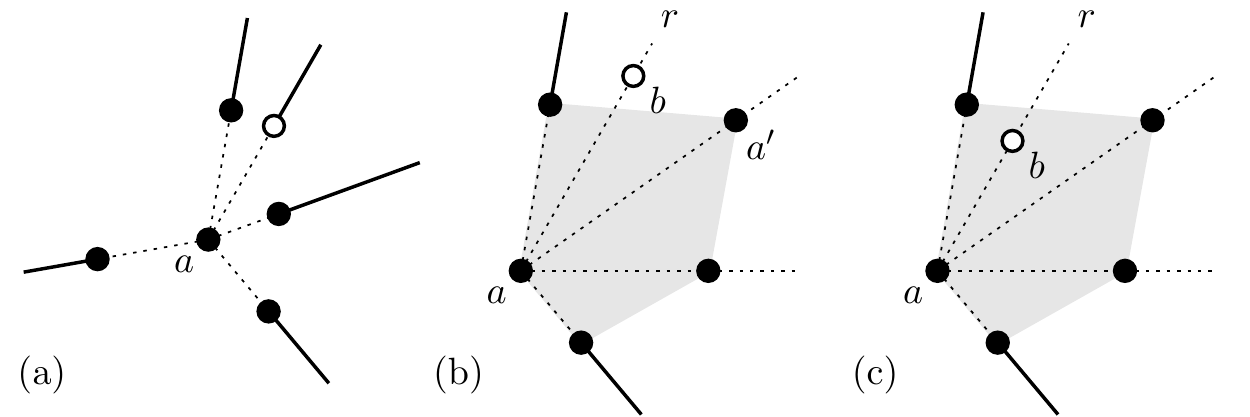}
\caption{\label{minedgecut}In each case the remaining points of $B \cup C$ must lie on the solid segments of the rays.}
\end{figure}

\section{A Key Lemma}

We call a plane graph drawn with straight edges a \emph{non-crossing geometric graph}. 
The following interesting fact about non-crossing geometric graphs will prove useful. It says that two properly coloured non-crossing geometric graphs that are separated by a line can be joined by an edge such that the union is a properly coloured non-crossing geometric graph.
Note that this is false if the two graphs are not separated by a line, as demonstrated by the example in Figure~\ref{counterex}.

\begin{figure}[!h]
\includegraphics{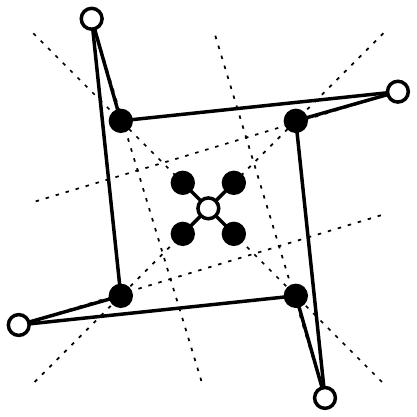}
\caption{\label{counterex}Two properly coloured non-crossing geometric graphs with no black-white edge between them.}
\end{figure}

\begin{lemma}\label{atlem}
Let $G_1$ and $G_2$ be two properly coloured non-crossing geometric graphs with at least one edge each. Suppose their convex hulls are disjoint and that $V(G_1) \cup V(G_2)$ is not collinear.
Then there exists an edge $e \in V(G_1) \times V(G_2)$ such that $G_1 \cup G_2 \cup \{e\}$ is a properly coloured non-crossing geometric graph.
\end{lemma}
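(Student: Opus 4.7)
Since $H_1 := \conv(V(G_1))$ and $H_2 := \conv(V(G_2))$ are disjoint convex sets and $V(G_1) \cup V(G_2)$ is not collinear, they admit two common outer tangent lines, giving two edges $a_1 a_2$ and $b_1 b_2$ on the boundary of the combined hull $\conv(V(G_1) \cup V(G_2))$, with $a_1, b_1 \in V(G_1)$ and $a_2, b_2 \in V(G_2)$. Because every edge of $G_i$ lies inside $H_i$, and the tangent edges lie on the boundary of the combined hull, neither $a_1 a_2$ nor $b_1 b_2$ crosses any edge of $G_1 \cup G_2$. If either has endpoints of different colours, that edge is the desired $e$.

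Assume instead that both tangent edges are monochromatic: $c(a_1) = c(a_2) =: \alpha$ and $c(b_1) = c(b_2) =: \beta$. The plan for this case is a ``sweep'' along the two arcs of $\partial H_1$ and $\partial H_2$ that face each other. Writing the front arc of $H_1$ (facing $H_2$) as $a_1 = p_0, p_1, \ldots, p_k = b_1$ and that of $H_2$ as $a_2 = q_0, q_1, \ldots, q_m = b_2$, I would construct a monotone lattice path from $(0,0)$ to $(k,m)$ such that at every visited pair $(i,j)$ the segment $p_i q_j$ is non-crossing with $G_1 \cup G_2$. If any such $p_i q_j$ is bichromatic, we are done. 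Otherwise monochromaticity propagates: advancing only $i$ gives $c(p_{i+1}) = c(q_j) = c(p_i)$ and likewise when only $j$ advances, so the shared colour along the sweep is constant. This forces $\alpha = \beta$, and moreover every front-arc vertex of $H_1$ and $H_2$ has colour $\alpha$.

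In this sub-case, since $G_1$ has an edge and no edge of $G_1$ can lie entirely within the front arc of $H_1$ (all those vertices having colour $\alpha$), some edge of $G_1$ has an endpoint $v$ off the front arc, with $c(v) \neq \alpha$. I would then argue that $v$ sees some vertex of $V(G_2)$ without crossings; one potential obstacle, the ``shortcut'' front-arc edge $a_1 b_1$, is precluded since $c(a_1) = c(b_1) = \alpha$ would violate properness. A symmetric argument handles the case where the colour-witness lies in $V(G_2)$.

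The main obstacle is establishing the sweep rigorously: from any admissible pair $(i,j)$, one must show that at least one of $p_{i+1} q_j$ or $p_i q_{j+1}$ is also admissible. Geometrically, advancing an index by one along a front arc preserves the ``tangent-like'' configuration so the new segment should still stay outside the interiors of $H_1$ and $H_2$; however, interior edges of $G_1$ or $G_2$ may obstruct particular candidate segments (for example, even the ``long diagonal'' $a_1 b_2$ can cross a front-arc edge of $G_1$ in some configurations). Closing this gap likely requires careful angular analysis at each sweep step, or a secondary induction on $|V(G_1)| + |V(G_2)|$ that removes isolated vertices whenever doing so preserves non-collinearity and disjointness of the hulls.
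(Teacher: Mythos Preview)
Your outline is explicitly incomplete, and the two places you flag as gaps are exactly where the argument breaks down.

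The sweep step is not a formality. Even restricting to front-arc vertices, a segment $p_iq_j$ can enter $H_1$ whenever $q_j$ lies in the interior cone of $H_1$ at $p_i$; nothing in the definition of ``front arc between the tangent points'' rules this out in general, and once the segment enters $H_1$ it can cross edges of $G_1$. So the claim that from every admissible $(i,j)$ one of $(i{+}1,j)$, $(i,j{+}1)$ is again admissible needs a real geometric argument, not just an angular sketch.

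More seriously, your monochromatic sub-case is circular. After concluding that every front-arc vertex has colour $\alpha$, you pick an interior vertex $v\in V(G_1)$ with $c(v)\neq\alpha$ and assert that $v$ ``sees some vertex of $V(G_2)$ without crossings''. But that is precisely the content of the lemma you are trying to prove: edges of $G_1$ joining front-arc vertices to interior vertices (which proper colouring does \emph{not} forbid) can completely screen $v$ from $V(G_2)$. The lemma only promises \emph{some} bichromatic non-crossing bridge, not one with endpoint $v$; the witness may have to be a different interior vertex on either side. Ruling out $a_1b_1$ as an obstacle is far from enough.

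The paper's proof avoids both issues by abandoning the convex hull early. It works with arbitrary \emph{visible pairs} $(v_1,v_2)$, $v_i\in V(G_i)$, whose segment misses $G_1\cup G_2$; it orders them by the height at which they meet a fixed separating line; and it singles out two local configurations (``type-1'' and ``type-2'') that are guaranteed to occur. Taking the \emph{lowest} pair of either type and analysing the empty triangle it bounds produces either a bichromatic visible pair directly or a strictly lower type-1/type-2 pair, a contradiction. The well-founded descent on height is what closes the argument; your sweep has no analogous termination mechanism once you are forced off the front arcs.
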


\begin{proof}
Let $h$ be a line separating $G_1$ and $G_2$. Assume that $h$ is vertical with $G_1$ to the left. Let $G:=G_1 \cup G_2$.

Call a pair of vertices $v_1 \in V(G_1)$ and $v_2 \in V(G_2)$ a \emph{visible pair} if the line segment between them does not intersect any vertices or edges of $G$.
We aim to find a visible pair with different colours, so assume for the sake of contradiction that every visible pair is monochromatic.

We may assume that $G_1$ and $G_2$ are edge maximal with respect to the colouring, since the removal of an edge only makes it easier to find a bichromatic visible pair.

Suppose the result holds when there are no isolated vertices in $G$. Then, if there are isolated vertices,
we can ignore them and find a bichromatic visible pair $(v_1,v_2)$ in the remaining graph. 
If the edge $v_1v_2$ contains some of the isolated vertices, then it has a sub-segment joining two vertices of different colours. If these vertices lie on the same side of $h$ then the graphs were not edge maximal after all. If they are on different sides, then they are a bichromatic visible pair. Thus we may assume that there are no isolated vertices in $G$.

Let $l$ be the line containing a visible pair $(v_1,v_2)$, then the \emph{height} of the pair is the point at which $l$ intersects $h$.
Call the pair \emph{type-1} if $v_1$ and $v_2$ both have a neighbour strictly under the line $l$ (Figure \ref{atlemma}(a)).
Call the pair \emph{type-2} if there are edges $v_1w_1$ in $G_1$ and $v_2w_2$ in $G_2$ such that 
the line $g$ containing $v_1w_1$ intersects $v_2w_2$ (call this point $x$), 
$w_2$ lies strictly under $g$, 
and the closed triangle $v_1v_2x$ contains no other vertex (Figure \ref{atlemma}(b)). 
Here $x$ may equal $v_2$, in which case $g = l$. A visible pair is also type-2 in the equivalent case with the subscripts interchanged.

A particular visible pair may be neither type-1 nor type-2, but we may assume there exists a type-1 or type-2 pair. To see this, consider the highest visible pair $(v_1,v_2)$ and assume it is neither type-1 nor type-2 (see Figure \ref{atlemma}(c)). Note that $v_1v_2$ is an edge of the convex hull of $G$.
Since all of $G$ lies on or below the line $l$ containing $v_1v_2$, both vertices must have degree $1$ and their neighbours $w_1$ and $w_2$ must lie on $l$. 
For $i=1,2$, let $x_i$ be a vertex of $G_i$ not on $l$ that minimizes the angle $\angle v_iw_ix_i$. Since $V(G)$ is not collinear, at least one of $x_i$ exists. 
By symmetry, we may assume that either only $x_1$ exists, or both $x_1$ and $x_2$ exist and
${\rm dist}(x_1,l)\leq{\rm dist}(x_2,l)$.
In either case, $(x_1,v_2)$ and $(x_1,w_2)$ are visible pairs and at least one of them is bichromatic.


So now assuming there exists a type-1 or type-2 visible pair, let $(u_1,u_2)$ be the lowest such pair:

\emph{Case (i)} 
The pair $(u_1,u_2)$ is type-1 (see Figure \ref{atlemma}(d)).
Let $u_1w_1$ be the first edge of $G_1$ incident to $u_1$ in a clockwise direction, starting at $u_1u_2$.
Let $u_2w_2$ be the first edge of $G_2$ incident to $u_2$ in a counterclockwise direction, starting at $u_2u_1$.
Let $x$ be the point on the segment $u_1w_1$ closest to $w_1$ such that the open triangle $u_1u_2x$ is disjoint from $G$.
Similarly, let $y$ be the point on the segment $u_2w_2$ closest to $w_2$ such that the open triangle $u_1u_2y$ is disjoint from $G$.

Without loss of generality, the intersection of $u_1y$ and $u_2x$ is to the left of $h$, or on $h$.
Therefore the segment $xu_2$ is disjoint from $G_2$. 
Let $v \in V(G_1)$ be the vertex on $xu_2$ closest to $u_2$.
Thus $(v,u_2)$ is a visible pair of height less than $(u_1,u_2)$.
We may assume that $v \neq w_1$, otherwise $(v,u_2)$ would be bichromatic.
The point $w_2$ is under the line $vu_2$ and $v$ has no neighbour above the line $vu_2$.
Hence $v$ either has a neighbour under the line $vu_2$ and $(v,u_2)$ is type-1, or
$v$ has a neighbour on the line $vu_2$ and $(v,u_2)$ is type-2.
This contradicts the assumption that $(u_1,u_2)$ was the lowest pair of either type.

\begin{figure}[t]
\includegraphics{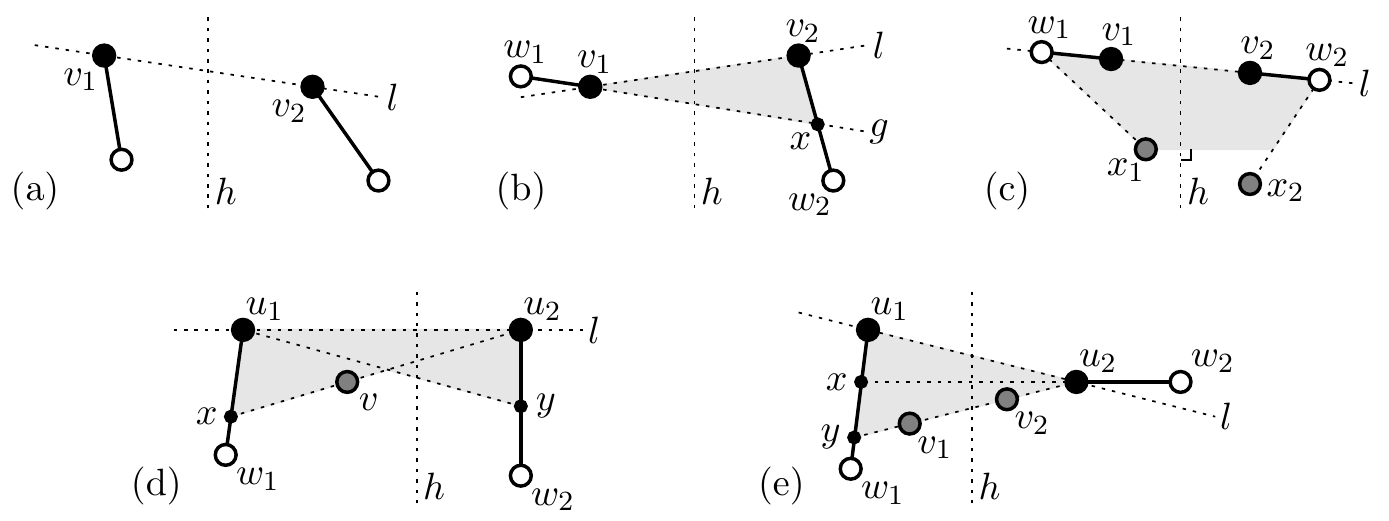}
\caption{\label{atlemma}Proof of Lemma \ref{atlem}. The shaded areas are empty. (a) A type-1 visible pair. (b) A type-2 visible pair. (c) The highest visible pair. (d) The lowest pair is type-1. (e) The lowest pair is type-2. }
\end{figure}

\emph{Case (ii)}
The pair $(u_1,u_2)$ is type-2 with neighbours $w_1$ and $w_2$, such that the line $u_2w_2$ intersects the edge $u_1w_1$ at some point $x$ (see Figure \ref{atlemma}(e)).
Let $y$ be the point on the segment $u_1w_1$ closest to $w_1$ such that the open triangle $u_1u_2y$ is disjoint from $G$. Note $y$ is below $x$.

First assume that $G_2$ intersects $yu_2$. 
Let $v_2$ be the closest vertex to $u_2$ on $yu_2$. 
Thus $(u_1,v_2)$ is a visible pair of height less than $(u_1,u_2)$. 
Let $z$ be a neighbour of $v_2$. 
If $z$ is under the line $u_1v_2$ then $(u_1,v_2)$ is type-1 since $w_1$ is also under this line.
Note $z$ cannot lie above the line $yu_2$ since $u_1$ and $u_2$ see each other and the open triangle $u_1u_2y$ is empty.
Furthermore, if $z$ lies on $yu_2$ then $z=u_2$ and $(u_1,v_2)$ is bichromatic.
Thus if $z$ is not under the line $u_1v_2$, the line $v_2z$ must intersect the edge $u_1w_1$ at a point above $y$, so $(u_1,v_2)$ is a type-2 pair.
Hence the pair $(u_1,v_2)$ is type-1 or type-2, a contradiction.

Now assume that $yu_2$ does not intersect $G_2$, and therefore does intersect $G_1$, and let $v_1 \in V(G_1)$ be the vertex on $yu_2$ closest to $u_2$.
Thus $(v_1,u_2)$ is a visible pair of height less than $(u_1,u_2)$.
We may assume that $v_1 \neq w_1$, otherwise $(v_1,u_2)$ would be bichromatic.
Since $w_2$ is under the line $v_1u_2$,
if $v_1$ has a neighbour under the line $v_1u_2$ then $(v_1,u_2)$ is a type-1 pair.
Otherwise the only neighbour of $v_1$ is on the line $v_1u_2$ which makes $(v_1,u_2)$ a type-2 pair.
Hence the pair $(v_1,u_2)$ is type-1 or type-2, a contradiction.
\end{proof} 

\section{Vertex Connectivity}

As is common practice, we will often refer to vertex-connectivity simply as connectivity. Connectivity of visibility graphs is not as straightforward as edge-connectivity since there are visibility graphs with connectivity strictly less than the minimum degree (see Figure \ref{twothirds}). Our aim in this section is to show that the connectivity of a visibility graph is at least half the minimum degree (Theorem \ref{halfdelta}). This follows from Theorem~\ref{Pavel} below, which says that bivisibility graphs contain large non-crossing subgraphs. In the proof of Theorem~\ref{Pavel} we will need a version of the Ham Sandwich Theorem for point sets in the plane, and also Lemma~\ref{pavlem}.

\begin{theorem}\label{hamsand} (Ham Sandwich. See \cite{matou}.) Let $A$ and $B$ be finite sets of points in the plane. Then there exists a line $h$ such that each closed half-plane determined by $h$ contains at least half of the points in $A$ and at least half of the points in $B$.
\end{theorem}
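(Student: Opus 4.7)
The plan is to reduce the discrete planar statement to the continuous Ham Sandwich theorem for absolutely continuous probability measures, which is itself a standard application of the Borsuk--Ulam theorem. For each small $\varepsilon > 0$, I would replace each point of $A$ (respectively $B$) by the uniform probability distribution on the open disk of radius $\varepsilon$ centred at that point, and average these to obtain an absolutely continuous probability measure $\mu_A^\varepsilon$ (respectively $\mu_B^\varepsilon$) on $\R^2$ with total mass one and support a small neighbourhood of $A$ (respectively $B$).

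Next, I would invoke the continuous planar Ham Sandwich theorem: any two absolutely continuous probability measures in the plane admit a common bisecting line. The standard proof parametrizes oriented affine lines in $\R^2$ by points of the sphere $S^2$ (after a compactification that sends the equator to a family of ``lines at infinity''), sends each oriented line to the pair of signed mass imbalances between its two open half-planes in the two measures, and applies Borsuk--Ulam to this antipodally odd continuous map to produce a zero. Applied to $\mu_A^\varepsilon$ and $\mu_B^\varepsilon$, this produces a line $h_\varepsilon$ such that each closed half-plane determined by $h_\varepsilon$ carries $\mu_A^\varepsilon$-mass and $\mu_B^\varepsilon$-mass at least $\tfrac{1}{2}$.

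Finally, I would pass to the limit $\varepsilon \to 0$. Parametrizing each $h_\varepsilon$ by a unit normal vector and signed distance to the origin, and restricting to lines meeting a fixed closed ball containing $A \cup B$, the parameter space becomes compact, so a subsequence $h_{\varepsilon_k}$ converges to some limit line $h$. If a point $a \in A$ lies strictly in one open half-plane $H^+$ of $h$, then for all sufficiently small $\varepsilon_k$ the entire $\varepsilon_k$-disk around $a$ lies in the corresponding open half-plane of $h_{\varepsilon_k}$ and therefore contributes its full share $1/|A|$ to the $\mu_A^{\varepsilon_k}$-mass of that side. Since $h_{\varepsilon_k}$ bisects $\mu_A^{\varepsilon_k}$, the number of points of $A$ lying strictly in $H^+$ is at most $|A|/2$, and the same holds on the other side; consequently each closed half-plane of $h$ contains at least $|A|/2$ points of $A$, and an identical argument applies to $B$.

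The main obstacle is simply executing this limit step carefully: relative compactness of the family $\{h_\varepsilon\}$ is handled by the parametrization above once one notes that a bisecting line must meet the convex hull of $A \cup B$, and points of $A$ or $B$ that happen to lie on the limit line $h$ automatically count toward \emph{both} closed half-planes and so only help the inequality. An alternative, purely combinatorial route would be to fix the direction $\theta \in [0,\pi]$ of the normal and take $h_\theta$ to be a median line of $A$ perpendicular to $\theta$, then apply a discrete intermediate value argument to the signed imbalance of $B$ across $h_\theta$ using that $\theta \mapsto \theta + \pi$ swaps the two sides; this avoids compactness but requires more bookkeeping at the critical angles where the median line jumps.
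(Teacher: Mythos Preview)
The paper does not prove this theorem at all: it is quoted as a classical tool with a reference to Matou\v{s}ek's book, and used as a black box in the inductive proofs of Theorems~\ref{Pavel} and~\ref{tree}. So there is no ``paper's own proof'' to compare against.

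Your proposal is a correct and standard argument. Smoothing the point masses into absolutely continuous measures, applying the continuous Ham Sandwich theorem via Borsuk--Ulam, and then extracting a limit line by compactness is exactly how one derives the discrete statement from the continuous one; see, e.g., Matou\v{s}ek's treatment. Your compactness step is fine once you note, as you do, that any line bisecting $\mu_A^\varepsilon$ must meet $\conv(A)$ (and hence lies in a compact family of lines), and your counting at the limit is correct: a point strictly on one side of $h$ eventually has its entire $\varepsilon_k$-disk on the corresponding side of $h_{\varepsilon_k}$, forcing at most $|A|/2$ points of $A$ strictly on each side. The alternative median/IVT sketch you mention is also a well-known purely combinatorial route; either would suffice here.
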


\begin{lemma}\label{pavlem} Let $A$ be a set of points lying on a line $l$.
Let $B$ be a set of points, none of them lying on $l$.
Let $|A|\geq|B|$.
Then there is a non-crossing spanning tree in the bivisibility graph of $A$ and $B$.
\end{lemma}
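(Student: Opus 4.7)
The plan is to proceed by induction on $|B|$. In the base case $|B|=1$, the unique $b\in B$ sees every $a\in A$ (a would-be blocker would either be a second $A$-point collinear with $b$ off $l$, which is impossible, or a second $B$-point, of which there are none), so $\B(A,B)$ is a star and already a non-crossing spanning tree.

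For the inductive step, let $b^*\in B$ attain the maximum distance from $l$. The key geometric fact is that $b^*$ cannot lie in the interior of any segment $ab'$ with $a\in A$ and $b'\in B\setminus\{b^*\}$: the distance from $l$ interpolates linearly along such a segment between $0$ and $\operatorname{dist}(b',l)\le\operatorname{dist}(b^*,l)$, so any interior value is strictly less than $\operatorname{dist}(b^*,l)$. Hence removing $b^*$ does not change the visibility relations on the remaining vertices, and $\B(A,B\setminus\{b^*\})$ is exactly the restriction of $\B(A,B)$ to $A\cup(B\setminus\{b^*\})$. Applying the inductive hypothesis to the pair $(A,B\setminus\{b^*\})$ produces a non-crossing spanning tree $T'$ of this smaller bivisibility graph, and $T'$ is automatically a non-crossing subgraph of $\B(A,B)$.

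It remains to attach $b^*$ to $T'$ via a single edge $ab^*$ that is both a visibility edge of $\B(A,B)$ and non-crossing with $T'$. To analyse this, project each vertex $v$ of $T'$ onto $l$ from $b^*$: $A$-vertices project to themselves, and each $b'\in B\setminus\{b^*\}$ projects to the point $p(b')$ where line $b^*b'$ meets $l$. Since segments $b^*a$ cannot cross $T'$-edges on the opposite side of $l$ from $b^*$, we may assume without loss of generality that $B$ lies on the side of $l$ containing $b^*$. Because $b^*$ is strictly farthest from $l$, the angular order of $T'$'s vertices from $b^*$ coincides with the order of their projections along $l$, and a direct check reveals that $b^*a$ crosses an edge $uv\in T'$ in its interior exactly when $a$ lies strictly inside the projection interval $[\min\{p(u),p(v)\},\max\{p(u),p(v)\}]$, and $b^*a$ fails to be a visibility edge exactly when $a=p(b')$ for some $b'\in B\setminus\{b^*\}$.

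The hard part will be exhibiting an $a\in A$ avoiding both bad configurations. When the leftmost or rightmost element of $A\cup p(B\setminus\{b^*\})$ happens to be an $A$-vertex $a^*$, extremality immediately gives that $a^*$ is outside the interior of every projection interval and does not coincide with any $p(b')$, so $a^*$ works. The main subtlety arises when both extremal projections belong to $B\setminus\{b^*\}$, forcing $A$ to lie strictly between two $B$-projections. To resolve this we would exploit the non-crossing tree structure of $T'$: the angularly extreme $B$-vertex of $T'$ is itself visible from $b^*$ (nothing lies angularly beyond it), and a careful analysis of how its incident projection intervals nest within $T'$ should locate an $A$-neighbour (or an $A$-vertex reached by a short walk in $T'$) whose projection avoids every non-incident interval's interior.
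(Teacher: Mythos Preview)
Your induction setup and the geometric observation that the farthest point $b^*$ never blocks any other bivisibility edge are correct, and your projection characterisation of when $b^*a$ crosses an edge of $T'$ is accurate. The problem is that your proof is openly unfinished: the final paragraph (``we would exploit\dots'', ``should locate\dots'') only sketches a hope for the hard case where both angular extremes from $b^*$ are $B$-projections, and that hope is not obviously realisable. For instance, with $T'=\{a_1b_1,\,a_3b_1,\,a_3b_2,\,a_2b_2\}$ and projection order $p(b_1)<a_1<a_3<a_2<p(b_2)$, the angularly extreme $B$-vertex $b_1$ has $A$-neighbours $a_1,a_3$, yet its leftmost neighbour $a_1$ is \emph{blocked} (it lies in the interior of the interval of $a_3b_1$). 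The vertex that actually works here is $a_3$, found by no rule you have articulated. Since non-crossingness of $T'$ does \emph{not} translate into a laminar or otherwise tame structure on the projection intervals, locating a good $a$ in general is a genuine combinatorial-geometric problem, not a routine check.

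This is exactly the difficulty the paper sidesteps. Rather than removing a single $B$-point, the paper (when $B$ lies on one side of $l$) removes a pair $(a,b)$ that are consecutive on the convex hull of $A\cup B$, so that a line separates the edge $ab$ from the rest; induction gives a tree on the remainder, and the joining edge is supplied by Lemma~\ref{atlem} (the ``two separated properly-coloured non-crossing graphs can be joined'' lemma). When $B$ straddles $l$, the paper simply applies induction on each side and observes the two trees share $A$. Your attaching step for $b^*$ is essentially asking for a one-vertex analogue of Lemma~\ref{atlem}; that statement may well be true, but proving it is of comparable difficulty to Lemma~\ref{atlem} itself and cannot be dismissed as ``a careful analysis''. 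Either complete that argument in full, or switch to removing a hull pair and invoking Lemma~\ref{atlem} as the paper does. (Minor point: you also tacitly assume $b^*$ is \emph{strictly} farthest so that all projections $p(b')$ are finite; ties need a word.)
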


\begin{proof}
We proceed by induction on $|B|$. 
If $|B|=1$ then the point in $B$ sees every point in $A$, and we are done.
Now assume $1 < |B| \leq |A| $.

First suppose that all of $B$ lies to one side of $l$ and consider the convex hull $C$ of $A \cup B$. An end point $a$ of $A$ is a corner of $C$ and there is a point $b$ of $B$ visible to it in the boundary of $C$. There exists a line $h$ that separates $\{a,b\}$ from the rest of $A \cup B$. Applying induction and Lemma~\ref{atlem} we find a non-crossing spanning tree among $A\cup B \setminus \{a,b\}$ and an edge across $h$ to the edge $ab$, giving a non-crossing spanning tree of $\B(A,B)$.

Now suppose that there are points of $B$ on either side of $l$. Then we may apply the inductive hypothesis on each side to obtain two spanning trees. Their union is connected, and thus contains a spanning tree.
\end{proof}

\begin{theorem}\label{Pavel} Let $A$ and $B$ be disjoint sets of points in the plane with $|A|=|B|=n$ such that $A\cup B$ is not collinear. Then the bivisibility graph $\B(A,B)$ contains a non-crossing subgraph with at least $n+1$ edges.
\end{theorem}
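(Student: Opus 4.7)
The plan is induction on $n$, splitting the problem via the Ham Sandwich Theorem and joining the resulting pieces via Lemma~\ref{atlem}.

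For the base case $n=2$ (four points, not all collinear), the complete bipartite graph on $A,B$ contains at most one crossing, and at most one $A$--$B$ edge is blocked by a triple collinearity, so at least three non-crossing edges of $\B(A,B)$ always remain.

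For the inductive step $n\geq 3$, apply Theorem~\ref{hamsand} to $A$ and $B$ to obtain a line $h$ with each closed half-plane containing at least $\lceil n/2\rceil$ points from each of $A$ and $B$. If $A\subseteq h$ (or by symmetry $B\subseteq h$), then non-collinearity of $A\cup B$ forces $B\not\subseteq h$, and Lemma~\ref{pavlem} immediately produces a non-crossing spanning tree of $\B(A,B)$ with $2n-1\geq n+1$ edges. Otherwise, use the Ham Sandwich bounds to partition $A\cup B=S_1\sqcup S_2$, where $S_i$ consists of the $A\cup B$ points strictly on side $i$ of $h$ together with an appropriate share of the on-line points $(A\cup B)\cap h$, chosen so that $|S_i\cap A|=|S_i\cap B|=:m_i$ (with $m_1+m_2=n$) and so that the convex hulls of $S_1$ and $S_2$ remain disjoint; the latter can be arranged by partitioning the on-line points by their position along $h$ about a chosen splitting point. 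Apply the inductive hypothesis to each $S_i$ (or Lemma~\ref{pavlem} if a sub-configuration is collinear) to obtain a non-crossing subgraph $G_i$ with at least $m_i+1$ edges; any blocker of a segment within $S_i$ must itself lie on that segment, so the separation by $h$ ensures $G_i$ is also a subgraph of $\B(A,B)$. Finally, apply Lemma~\ref{atlem} to $G_1$ and $G_2$, viewed as properly $2$-coloured non-crossing geometric graphs with disjoint convex hulls: it supplies one further bichromatic (hence $A$--$B$) edge across $h$ that preserves the non-crossing property. This yields a total of at least $(m_1+1)+(m_2+1)+1=n+3\geq n+1$ non-crossing edges of $\B(A,B)$.

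The main obstacle is the careful construction of $S_1$ and $S_2$: the on-line points must be apportioned between the two sides so that each side is balanced in $A$- and $B$-counts, the two subsets have disjoint convex hulls (as Lemma~\ref{atlem} requires), and neither side is so small or collinear that the inductive hypothesis fails. Such degenerate cases must be handled individually, typically by invoking Lemma~\ref{pavlem} directly on the offending side; the slack of two extra edges in $n+3\geq n+1$ leaves plenty of room for any case-by-case losses.
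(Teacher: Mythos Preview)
Your outline has the right ingredients (induction, Ham Sandwich, Lemma~\ref{pavlem} for collinear sub-cases), but the heart of the argument has a genuine gap. You assert that the points on $h$ can always be ``partitioned by their position along $h$ about a chosen splitting point'' so that each side $S_i$ satisfies $|S_i\cap A|=|S_i\cap B|$. This can fail for the Ham Sandwich line you are handed. Take $n=4$ with $a_1,a_2$ strictly above $h$, $a_3,a_4$ on $h$, then $b_1,b_2$ on $h$ to the right of $a_3,a_4$, and $b_3,b_4$ strictly below $h$; this is a legitimate Ham Sandwich bisection. The upper side already contains two $A$'s, and every left-prefix of the on-line sequence $a_3,a_4,b_1,b_2$ adds at least as many $A$'s as $B$'s, so no splitting point balances $S_1$. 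You give no argument that a different Ham Sandwich line can always be chosen to avoid this.

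This is not a cosmetic issue: without disjoint convex hulls Lemma~\ref{atlem} is inapplicable, and worse, the non-crossing subgraphs you build inductively on each side need not be subgraphs of $\B(A,B)$ at all, since an edge along $h$ in $\B(A\cap S_1,\,B\cap S_1)$ may pass through a point of $S_2$ lying on $h$.

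The paper's proof avoids this obstacle by a different mechanism. It first disposes of the case where some line carries at least $n$ points of $A\cup B$ (via Lemma~\ref{pavlem}). In the remaining case it does \emph{not} use Lemma~\ref{atlem}; instead it assigns the on-line points asymmetrically (leftmost $A$'s to the upper side, rightmost $B$'s to the upper side, and vice versa below). This does not give disjoint convex hulls, but it guarantees that one of the two inductive subgraphs has at most one edge along $h$; discarding that single edge removes all conflicts and still leaves $n+1$ edges from the $n+2$ produced. Your use of Lemma~\ref{atlem} is closer in spirit to the proof of Theorem~\ref{tree}, where an explicit case analysis on the sign pattern along $h$ is what makes the separation work.
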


\begin{proof}
We proceed by induction on $n$. The statement holds for $n=1$, since no valid configuration exists. 
For $n=2$, any triangulation of $A\cup B$ contains at least five edges.
At most one edge has both endpoints in $A$, and similarly for $B$.
Removing these edges, we obtain a non-crossing subgraph of $\B(A,B)$ with at least three edges.
Now assume $n>2$.

\emph{Case (i)} First suppose that there exists a line $l$ that contains at least $n$ points of $A \cup B$.
Let $A_0 := A \cap l$, $B_0 := B \cap l$, $A_1 := A \setminus l$ and $B_1 := B \setminus l$. 
Without loss of generality, $|A_0| \geq |B_0|$. 

If $|A_0|>|B_0|$ then $|A_0|+|B_1|> |B_0| + |B_1|=n$.   
Since $|A_0| + |B_0| \geq n = |B_1| + |B_0|$ we have $|A_0| \geq |B_1|$,
so we may apply Lemma~\ref{pavlem} to $A_0$ and $B_1$. 
We obtain a non-crossing subgraph of $\B(A,B)$ with $|A_0|+|B_1|-1 \geq n$ edges, and 
by adding an edge along $l$ if needed, we are done.

Now assume $|A_0|=|B_0|$. 
We apply Lemma~\ref{pavlem} to $A_0$ and $B_1$, obtaining a non-crossing subgraph with $n-1$ edges, to which we may add one edge along $l$. 
We still need one more edge.
Suppose first that one open half-plane determined by $l$ contains points of both $A_1$ and $B_1$. 
Let $a$ and $b$ be the furthest points of $A_1$ and $B_1$ from $l$ in this half-plane. 
Since $|A_0|=|B_0|$ we may assume that $a$ is at least as far from $l$ as $b$.
Then we may add an edge along the segment $ab$, because none of the edges from $A_0$ to $B_1$ cross it.
It remains to consider the case where $l$ separates $A_1$ from $B_1$. Then applying Lemma~\ref{pavlem} on each side of $l$ we find a non-crossing subgraph with $2n-1$ edges: $|A_0|+|B_1|-1$ on one side, $|B_0|+|A_1|-1$ on the other side, and one more along $l$.

\emph{Case (ii)} Now assume that no line contains $n$ points in $A \cup B$.
By Theorem~\ref{hamsand} there exists a line $h$ such that each of the closed half-planes determined by $h$ contains at least $\frac{n}{2}$ points from each of $A$ and $B$.
Assume that $h$ is horizontal.
Let $A^+$ be the points of $A$ that lie above $h$ along with any that lie on $h$ that we choose to assign to $A^+$. Define $A^-$, $B^+$ and $B^-$ in a similar fashion.
Now assign the points on $h$ to these sets so that each has exactly $\ceil{\frac{n}{2}}$ points. 
In particular, assign the required number of \emph{leftmost} points of $h\cap A$ to $A^+$ and \emph{rightmost} points of $h\cap A$ to $A^-$. Do the same for $h \cap B$ with left and right interchanged.
If $n$ is even then $A^+\cup A^-$ and $B^+\cup B^-$ are partitions of $A$ and $B$.
If $n$ is odd then $|A^+ \cap A^-| = |B^+ \cap B^-| =1$.

Since there is no line containing $n$ points of $A \cup B$, the inductive hypothesis may be applied on either side of $h$. Thus there is a non-crossing subgraph with $\ceil{\frac{n}{2}}+1$ edges on each side.
The union of these subgraphs has at least $n+2$ edges, but some edges along $h$ may overlap. Due to the way the points on $h$ were assigned, one of the subgraphs has at most one edge along $h$. 
(If $n$ is odd, this is the edge between the two points that get assigned to both sides.)
Deleting this edge from the union yields a non-crossing subgraph of $\B(A,B)$ with at least $n+1$ edges.
\end{proof}

\begin{theorem}\label{halfdelta}
Every non-collinear visibility graph with minimum degree $\delta$ has connectivity at least $\frac{\delta}{2} +1$.
\end{theorem}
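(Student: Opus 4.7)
The plan is to argue by contradiction, assuming $\kappa(G)<\frac{\delta}{2}+1$, equivalently $\delta\geq 2\kappa-1$. Let $C$ be a minimum vertex cut, let $A$ be one component of $G-C$, and let $B$ be the union of the remaining components; since every $v\in A$ has its $\geq\delta$ neighbours in $A\cup C$, we obtain $|A|,|B|\geq n:=\delta-\kappa+1$. First I would note that any non-collinear visibility graph has $\delta\geq 2$ (otherwise all other vertices lie on a single ray from the unique neighbour of a degree-one vertex, forcing collinearity) and $\kappa\geq 2$ (a cut vertex $c$ forces every $A$-$B$ segment to pass through $c$, which makes $V(G)$ collinear unless one component of $G-c$ is a singleton of degree~$1$, contradicting $\delta\geq 2$). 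The contradiction hypothesis thus gives $n\geq\kappa\geq 2$.

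The central tool is a slight extension of Observation~\ref{obs} to arbitrary $A'\subseteq A$ and $B'\subseteq B$: each edge of $\B(A',B')$ is a segment whose interior $V(G)$-points form a visibility path in $G$ from $A$ to $B$, which must cross $C$, so the segment contains a point of $C$; two non-crossing such segments cannot share an interior $C$-vertex (they would either cross in their interiors, or share an endpoint, forcing one to contain a point of $A'\cup B'$ in its interior). Hence any non-crossing subgraph of $\B(A',B')$ with $t$ edges witnesses $\kappa\geq t$. Suppose first that $A\cup B$ is not collinear. Then there are three non-collinear points among them, say two in $A$ and one in $B$; I would extend these to sets $A'\subseteq A$ and $B'\subseteq B$ of size $n$ each so that $A'\cup B'$ remains non-collinear, and apply Theorem~\ref{Pavel} to obtain a non-crossing subgraph of $\B(A',B')$ with at least $n+1$ edges. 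This yields $\kappa\geq n+1=\delta-\kappa+2$, so $\kappa\geq\frac{\delta}{2}+1$, a contradiction.

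The remaining case, where $A\cup B\subseteq l$ for some line $l$, is the main obstacle because Theorem~\ref{Pavel} does not apply. Here I would argue directly that $|C|\geq\delta$, which contradicts $\kappa<\frac{\delta}{2}+1\leq\delta$ for $\delta\geq 2$. Since $V(G)$ is non-collinear we have $C\setminus l\neq\emptyset$; moreover $A$ forms a contiguous block on $l$, because the components of $G-C$ lying on $l$ are exactly the maximal runs of $l$-consecutive non-$C$ vertices, each such run entirely in $A$ or entirely in $B$ (since $A$-$B$ edges are forbidden). Let $v$ and $v'$ be the leftmost and rightmost vertices of $A$ on $l$, distinct since $|A|\geq 2$; all of their neighbours off $l$ lie in $C\setminus l$, and all of their on-$l$ neighbours outside $A$ lie in $C\cap l$. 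If $v$ is the leftmost vertex of $V(G)\cap l$ (or symmetrically $v'$ the rightmost), then $v$ has at least $\delta-1$ off-$l$ neighbours in $C\setminus l$, together with at least one further $C\cap l$ vertex in the gap between $A$ and the nearest $B$-group on $l$, so $|C|\geq\delta$. Otherwise $v$ has an on-$l$ neighbour in $C\cap l$ immediately to its left and $v'$ one immediately to its right; these two vertices lie on opposite sides of $A$ and are therefore distinct, giving $|C\cap l|\geq 2$, while $v$ contributes $\delta-2$ off-$l$ neighbours in $C\setminus l$, again yielding $|C|\geq\delta$.
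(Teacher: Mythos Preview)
Your proof is correct and follows the same approach as the paper: choose equal-size subsets of the two sides of a cut, apply Theorem~\ref{Pavel} to get $n+1$ non-crossing edges in the bivisibility graph, and combine with Observation~\ref{obs} and the degree bound $\delta\le |A|+|C|-1$. The only substantive difference is that you explicitly treat the degenerate case $A\cup B\subseteq l$ (showing $|C|\ge\delta$ by counting on-line and off-line neighbours of the extremal points of the contiguous block~$A$), whereas the paper absorbs this into the phrase ``whilst ensuring that $A\cup B$ is not collinear''; your handling of this edge case makes the argument slightly more complete.
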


\begin{proof}
Suppose $\{A,B,C\}$ is a partition of the vertex set of a non-collinear visibility graph such that $C$ separates $A$ and $B$, and $|A|\leq |B|$. By considering a point in $A$ we see that $\delta \leq |A| + |C| -1$. By removing points from $B$ until $|A|=|B|$ whilst ensuring that $A \cup B$ is not collinear, we may apply Theorem~\ref{Pavel}  and Observation \ref{obs} to get $|C|\geq |A| +1$.
Combining these inequalities yields $|C| \geq \frac{\delta}{2}+1$.
\end{proof}

The following observations are corollaries of Theorem \ref{halfdelta}, though they can also be proven directly by elementary arguments.

\begin{proposition}
The following are equivalent for a visibility graph $G$:
(1) $G$ is not collinear,
(2) $\kappa(G)\geq2$,
(3) $\lambda(G)\geq2$ and
(4) $\delta(G)\geq2$.
\end{proposition}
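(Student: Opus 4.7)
The plan is to close a cycle of implications among the four conditions, leveraging the standard chain $\kappa(G) \leq \lambda(G) \leq \delta(G)$ together with Theorem~\ref{halfdelta}. The implications $(2) \Rightarrow (3) \Rightarrow (4)$ are immediate from this chain, so only $(4) \Rightarrow (1)$ and $(1) \Rightarrow (2)$ require argument.

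For $(4) \Rightarrow (1)$, I would proceed by contrapositive. If every vertex of $G$ lies on a single line $\ell$, then the visibility graph is a path along $\ell$, and its two endpoints each have degree exactly $1$; hence $\delta(G) = 1 < 2$.

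The substantive step is $(1) \Rightarrow (2)$, and the key input is the following elementary observation: every non-collinear visibility graph satisfies $\delta(G) \geq 2$. Indeed, the neighbours of any vertex $v$ correspond to the distinct rays from $v$ through the remaining points of $G$, since on each such ray $v$ sees exactly the nearest point. So if $\deg(v) = 1$, all other vertices lie on a single ray emanating from $v$, forcing the entire point set to be collinear. Combining $\delta(G) \geq 2$ with Theorem~\ref{halfdelta} then yields $\kappa(G) \geq \delta(G)/2 + 1 \geq 2$, which is $(2)$.

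There is no real obstacle: the proposition is essentially a packaging of Theorem~\ref{halfdelta} with the basic geometric degree observation above, together with the textbook inequality $\kappa \leq \lambda \leq \delta$.
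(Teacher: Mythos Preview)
Your proof is correct and follows exactly the route the paper indicates: the paper presents this proposition without a written proof, noting only that it is a corollary of Theorem~\ref{halfdelta} (and can alternatively be shown by elementary means). Your argument makes precisely this derivation explicit, combining the chain $\kappa\leq\lambda\leq\delta$, the ray-counting observation that non-collinearity forces $\delta\geq 2$, and Theorem~\ref{halfdelta} to close the cycle.
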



\begin{proposition}
The following are equivalent for a visibility graph $G$:
(1) $\kappa(G)\geq3$,
(2) $\lambda(G)\geq3$ and
(3) $\delta(G)\geq3$.
\end{proposition}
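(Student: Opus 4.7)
The plan is to exploit the standard inequality chain $\kappa(G)\leq\lambda(G)\leq\delta(G)$, which immediately gives (1) $\Rightarrow$ (2) $\Rightarrow$ (3) with no geometry at all. So the only content is the reverse implication (3) $\Rightarrow$ (1).

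For (3) $\Rightarrow$ (1), I would first argue that $\delta(G)\geq 3$ forces $G$ to be a non-collinear visibility graph. Indeed, a visibility graph on a collinear point set is a path, whose minimum degree is at most $2$; so any visibility graph with $\delta \geq 3$ cannot have all its vertices on a line. This lets us invoke \thmref{halfdelta}, which applies only in the non-collinear case.

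Next, \thmref{halfdelta} yields $\kappa(G) \geq \delta(G)/2 + 1 \geq 3/2 + 1 = 5/2$. Since vertex-connectivity is an integer, this rounds up to $\kappa(G) \geq 3$, as required. The argument for the previous proposition is identical in structure, using $\delta \geq 2$ to obtain non-collinearity and then $\kappa \geq \delta/2 + 1 \geq 2$.

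There is no real obstacle here: the entire content of the proposition is packaged in \thmref{halfdelta} and the elementary fact that collinear visibility graphs are paths. The only thing worth flagging is the integrality step, which is what converts the bound $\delta/2 + 1 = 5/2$ into the sharp conclusion $\kappa \geq 3$; without integrality, \thmref{halfdelta} would not quite suffice.
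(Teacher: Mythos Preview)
Your proposal is correct and matches the paper's own treatment: the paper states this proposition (and the preceding one) as a corollary of Theorem~\ref{halfdelta} without giving an explicit proof, and your derivation via $\kappa\geq\delta/2+1\geq 5/2$ together with integrality is exactly the intended route. One cosmetic point: a collinear visibility graph (a path on at least two vertices) has minimum degree $1$, not merely ``at most $2$'', so $\delta\geq 2$ already forces non-collinearity --- but your weaker statement still suffices for the argument.
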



\section{Vertex Connectivity with Bounded Collinearities}

For the visibility graphs of point sets with $n$ points and at most $\ell$ collinear, connectivity is at least $\frac{n-1}{\ell-1}$, just as for edge-connectivity. Bivisibility graphs will play a central role in the proof of this result.
For point sets $A$ and $B$ an \emph{$AB$-line} is a line containing points from both sets.

\begin{theorem} \label{thm:bivis1}
Let $A \cup B$ be a non-trivial partition of a set of $n$ points with at most $\ell$ on any $AB$-line. Then the bivisibility graph $\B(A,B)$ contains a non-crossing forest with at least $\frac{n-1}{\ell-1}$ edges. In particular, if $\ell =2$ then the forest is a spanning tree.
\end{theorem}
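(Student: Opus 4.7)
The plan is to induct on $n$, mimicking the ham-sandwich strategy of Theorem~\ref{Pavel}: split the point set by a line $h$, apply induction on each side, and glue the two resulting forests together via Lemma~\ref{atlem}. The key numerical identity is
\[
\frac{n^{+}-1}{\ell-1}+\frac{n^{-}-1}{\ell-1}+1-\frac{n-1}{\ell-1}=\frac{\ell-2}{\ell-1}\geq 0,
\]
so a single gluing edge is exactly enough to absorb the loss in passing from the whole configuration to its two halves; in particular, a weaker-than-exact bisection will do, provided both sides inherit the hypothesis and we can join them.

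For the base case, handle $|A|=1$ directly (symmetrically $|B|=1$). If $A=\{a\}$, each line through $a$ meeting $B$ is an $AB$-line and contains at most $\ell-1$ points of $B$, so $B$ lies on at least $|B|/(\ell-1)$ distinct rays from $a$; on each ray, the $B$-point nearest $a$ is visible, yielding a non-crossing star with at least $(n-1)/(\ell-1)$ edges. When $A\cup B$ lies on a single line, $n\leq\ell$ forces $(n-1)/(\ell-1)\leq 1$, and any visible bichromatic consecutive pair on the line suffices.

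For the inductive step, assume $|A|,|B|\geq 2$ and $A\cup B$ non-collinear. Apply Theorem~\ref{hamsand} to obtain a line $h$ balancing $A$ and $B$, and---perturbing slightly about a single point if necessary---arrange that $h$ passes through at most one point, so the overlap $c:=|V^{+}\cap V^{-}|$ is at most $1$. Then $(A^{+},B^{+})$ and $(A^{-},B^{-})$ are non-trivial bichromatic partitions, the hypothesis ``at most $\ell$ on every $AB$-line'' is inherited, and edges of $\B(A^{\pm},B^{\pm})$ are automatically edges of $\B(A,B)$ because a segment contained in one closed half-plane is not blocked by any point strictly in the opposite half. By induction, obtain non-crossing forests $F^{\pm}$ with at least $(n^{\pm}-1)/(\ell-1)$ edges. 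If $c=0$, Lemma~\ref{atlem} supplies a bichromatic non-crossing edge $e$ across $h$ and $F^{+}\cup F^{-}\cup\{e\}$ is the desired forest; if $c=1$, the shared vertex on $h$ already joins the two forests acyclically, and the count $(n+1-2)/(\ell-1)=(n-1)/(\ell-1)$ is attained exactly. The ``in particular'' clause for $\ell=2$ then follows immediately, since $n-1$ edges in a forest on $n$ vertices constitute a spanning tree.

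The main obstacle is the perturbation step. When both $|A|$ and $|B|$ are odd, exact ham-sandwich forces two points (one of each colour) onto $h$, and then a cycle can form in $F^{+}\cup F^{-}$ through the two overlap vertices; breaking such a cycle loses one edge, which is too expensive to meet the target for $\ell\geq 3$. I expect to resolve this by rotating $h$ about one of the two overlap points so that the other moves off $h$, while each side retains a point of each colour---a short case analysis exploiting the fact that the recursion requires only a non-trivial, not a balanced, split. Any configuration that obstructs every such rotation will turn out to have a highly collinear structure, in which case Lemma~\ref{pavlem} applies directly to a line containing many points and produces the required forest without appealing to the recursion.
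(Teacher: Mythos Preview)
Your approach differs substantially from the paper's, and as you yourself acknowledge, it is incomplete. The paper gives a direct, non-inductive construction with no ham-sandwich step and no gluing lemma: fix any $v\in A$, draw all rays from $v$ through points of $B$, and in each resulting sector draw the maximal segments from a chosen $B$-point $w$ on the bounding ray to the $A$-points inside that sector (bisecting any sector of angle exceeding $\pi$). These rays and segments are pairwise disjoint and together cover $(A\cup B)\setminus\{v\}$; each contains at most $\ell-1$ points of $(A\cup B)\setminus\{v\}$ and at least one edge of $\B(A,B)$. Picking one such edge from each ray/segment yields the required non-crossing forest in a single step, with no recursion and no case analysis about points on a bisecting line.

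The gap you flag in your scheme is genuine, and the proposed fix is not a proof. When $|A|$ and $|B|$ are both odd, a ham-sandwich line may be forced through one point of each colour; rotating $h$ about one of them to push the other off can sweep further points across $h$ (up to $\ell$ points may be collinear with the pivot), and nothing guarantees that the first rotated position clearing the second overlap point still leaves both half-planes bichromatic. Your fallback to Lemma~\ref{pavlem} is also unjustified: that lemma requires one entire colour class to lie on a single line, and the failure of the rotation argument does not force anything of the sort. If you wish to salvage the inductive route you would need a real argument for the two-overlap case; the paper's ray--sector construction simply sidesteps all of it.
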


\begin{proof} 
The idea of the proof is to cover the points of $A \cup B$ with a large set of disjoint line segments each containing an edge of $G:= \B(A,B)$.
Start with a point $v \in A$. 
Consider all open ended rays starting at $v$ and containing a point of $B$. 
Each such ray contains at least one edge of $G$ and at most $\ell-1$ points of $(A \cup B) \setminus v$.  
For each ray $r$, choose a point $w \in B \cap r$.  
Draw all maximal line segments with an open end at $w$ and a closed end at a point of $A$ in the interior of the sector clockwise from $r$. Figure~\ref{bivisvertl} shows an example. 
 If one sector $S$ has central angle larger than $\pi$ then some points of $A$ may not be covered. 
In this case we bisect $S$, and draw segments from each of its bounding rays into the corresponding half of $S$ (assign points on the bisecting line to one sector arbitrarily). 
Like the rays, these line segments all contain at least one edge of $G$ and at most $\ell-1$ points of $(A \cup B) \setminus \{v,w\}$. 
Together with the rays, they are pairwise disjoint and cover all of $(A \cup B) \setminus v$.
Hence the edges of $G$ contained in them form a non-crossing forest with at least $\frac{n-1}{\ell-1}$ edges.
Note that if $\ell = 2$ we have a forest with $n-1$ edges, hence a spanning tree.
\end{proof}

Note that the $\ell=2$ case of Theorem~\ref{thm:bivis1} is well known \cite{kanekokano}.

\begin{figure}
\includegraphics{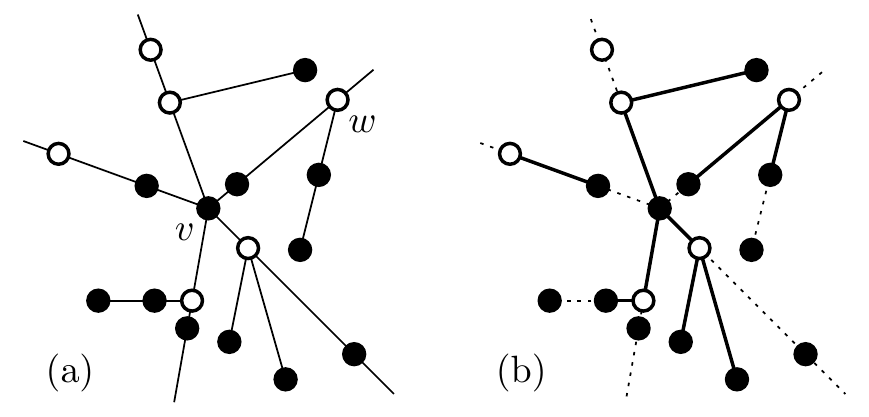}
\caption{\label{bivisvertl}Covering $A\cup B$ with rays and segments (a), each of which contains an edge of the bivisibility graph (b).}
\end{figure}

\begin{corollary}\label{nlvert}
Let $G$ be the visibility graph of a set of $n$ points with at most $\ell$ collinear. Then $G$ has connectivity at least $\frac{n-1}{\ell-1}$, which is best possible.
\end{corollary}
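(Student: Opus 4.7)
The plan is to combine Observation~\ref{obs} with Theorem~\ref{thm:bivis1} via a sharpening of the collinearity input: the separating hypothesis on $C$ forces each $AB$-line to carry a vertex of $C$, which shaves the effective collinearity bound on $A \cup B$ from $\ell$ down to $\ell-1$ and upgrades the naive estimate $\frac{n-1}{\ell}$ to the tight $\frac{n-1}{\ell-1}$.

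First I would take an arbitrary partition $\{A,B,C\}$ of $V(G)$ with $C$ separating $A$ from $B$ and $A,B$ both nonempty. The crucial observation is that every $AB$-line $l$ contains at least one vertex of $C$: pick $a \in A \cap l$ and $b \in B \cap l$ that are consecutive among $(A \cup B) \cap l$; the open segment $ab$ contains no other point of $A \cup B$, so if it also contained no point of $C$ then $ab$ would be a visibility edge of $G$, contradicting that $C$ separates $A$ from $B$. Consequently every $AB$-line contains at most $\ell-1$ points of $A \cup B$. I would then apply Theorem~\ref{thm:bivis1} to $(A,B)$ with the improved collinearity bound $\ell-1$, obtaining a non-crossing forest in $\B(A,B)$ with at least $\frac{|A|+|B|-1}{\ell-2}$ edges. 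Observation~\ref{obs} then yields $|C| \geq \frac{|A|+|B|-1}{\ell-2} = \frac{n - |C| - 1}{\ell - 2}$, which rearranges to $(\ell-1)|C| \geq n-1$, i.e., $|C| \geq \frac{n-1}{\ell-1}$.

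The degenerate case $\ell = 2$ fits the same picture: the crucial observation would require $3$ collinear points, so no proper vertex cut exists, $G = K_n$, and $\kappa(G) = n-1 = \frac{n-1}{\ell-1}$. For tightness, Figure~\ref{lbound} (reused from the edge-connectivity bound) exhibits a visibility graph with $\delta = \frac{n-1}{\ell-1}$, and $\kappa \leq \delta$. The main subtlety is the observation that separation by $C$ costs exactly one point per $AB$-line; without it, the direct application of Theorem~\ref{thm:bivis1} only gives $|C| \geq \frac{n-1}{\ell}$, and this one-point shaving is precisely what promotes the bound to the sharp $\frac{n-1}{\ell-1}$.
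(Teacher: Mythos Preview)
Your proof is correct and follows essentially the same approach as the paper: the paper applies Theorem~\ref{thm:bivis1} with parameters $n' = n - |C|$ and $\ell' = \ell - 1$, then uses Observation~\ref{obs} to get $|C| \geq \frac{n-|C|-1}{\ell-2}$ and rearranges, citing Figure~\ref{lbound} for tightness. You have supplied the justification for $\ell' = \ell - 1$ (that every $AB$-line must meet $C$) that the paper leaves implicit, and you handle the degenerate $\ell = 2$ case explicitly, which the paper's division by $\ell - 2$ glosses over.
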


\begin{proof}
Let $\{A,B,C\}$ be a partition of $V(G)$ such that $C$ separates $A$ and $B$.
Consider the bivisibility graph of $A \cup B$.
Applying Observation \ref{obs} and Theorem~\ref{thm:bivis1} (with $n'=n - |C|$ and $\ell' = \ell -1$) yields $|C| \geq \frac{n-|C|-1}{\ell-2}$, which implies $|C| \geq \frac{n-1}{\ell-1}$.
As in the case of edge-connectivity, the example in Figure~\ref{lbound} shows that this bound is best possible.
\end{proof}

In the case of visibility graphs with at most three collinear vertices, it is straightforward to improve the bound in Theorem~\ref{halfdelta}.

\begin{proposition}\label{3line}
Let $G$ be a visibility graph with minimum degree $\delta$ and at most three collinear vertices. Then $G$ has connectivity at least $\frac{2\delta+1}{3}$.
\end{proposition}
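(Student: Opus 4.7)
The plan is to mirror the proof of Theorem~\ref{halfdelta}, but replace the appeal to Theorem~\ref{Pavel} with the sharper output of Theorem~\ref{thm:bivis1} that becomes available once $\ell = 3$. So I would start by fixing an arbitrary vertex separation $\{A,B,C\}$ of $V(G)$ with $|A| \leq |B|$ and $C$ separating $A$ from $B$. Picking any $v \in A$ gives $\delta \leq |A| + |C| - 1$, since the neighbours of $v$ all lie in $(A \setminus \{v\}) \cup C$.

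Next I would establish the key geometric input: every $AB$-line contains at most $\ell' = 2$ points of $A \cup B$. Suppose an $AB$-line $L$ contained three points of $A \cup B$; since $L$ has at most $\ell = 3$ points of $V(G)$ in total, $L$ would contain no point of $C$. Among those three collinear points of $A \cup B$, some two consecutive points on $L$ must lie one in $A$ and one in $B$; these are visible, contradicting that $C$ separates $A$ from $B$ in $G$.

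Now I would apply Theorem~\ref{thm:bivis1} to the bivisibility graph $\B(A,B)$ with the collinearity parameter $\ell' = 2$. This yields a non-crossing spanning tree of $\B(A,B)$ with $|A|+|B|-1$ edges. By Observation~\ref{obs} each such edge must be blocked by a distinct point of $C$, so $|C| \geq |A| + |B| - 1 \geq 2|A| - 1$, i.e.\ $|A| \leq (|C|+1)/2$.

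Combining with the minimum-degree inequality gives
\[
\delta \leq |A| + |C| - 1 \leq \tfrac{|C|+1}{2} + |C| - 1 = \tfrac{3|C|-1}{2},
\]
which rearranges to $\kappa(G) = |C| \geq (2\delta+1)/3$, as required. The only step requiring any real care is the verification that every $AB$-line carries at most two points of $A \cup B$, since this is where both $\ell = 3$ and the separation hypothesis combine; the rest is direct arithmetic applied to results already in hand.
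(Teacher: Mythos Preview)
Your proposal is correct and follows essentially the same route as the paper: both arguments apply Theorem~\ref{thm:bivis1} with $\ell'=2$ together with Observation~\ref{obs} to obtain $|C|\geq |A|+|B|-1$, then combine this with the degree bound(s) to reach $|C|\geq\frac{2\delta+1}{3}$. The only cosmetic difference is that the paper uses both $\delta\leq|A|+|C|-1$ and $\delta\leq|B|+|C|-1$ and adds them, whereas you assume $|A|\leq|B|$ up front and use just one; also, your justification that every $AB$-line meets $A\cup B$ in at most two points is a bit more explicit than the paper's one-line remark.
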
 

\begin{proof} 
Let $\{A,B,C\}$ be a partition of $V(G)$ such that $C$ separates $A$ and $B$.
Thus each $AB$-line contains only two vertices in $A\cup B$.
Applying Theorem~\ref{thm:bivis1} (with $\ell=2$) and Observation \ref{obs} to $\B(A,B)$ gives $|C| \geq |A| +|B|-1$. For $v \in A$ and $w \in B$ note that $\delta \leq \deg(v) \leq |A| + |C| -1$ and $\delta \leq \deg(w) \leq |B|+|C|-1$. Combining these inequalities gives $|C| \geq \frac{2\delta+1}{3}$.
\end{proof}

In the case of visibility graphs with at most four collinear vertices, the same improvement is found as a corollary of the following theorem about bivisibility graphs. Lemma~\ref{atlem} is an important tool in the proof.

\begin{theorem}\label{tree}
Let $A$ and $B$ be disjoint point sets in the plane with $|A| = |B| = n$ such that $A\cup B$ has at most three points on any $AB$-line. Then the bivisibility graph $\B(A,B)$ contains a non-crossing spanning tree.
\end{theorem}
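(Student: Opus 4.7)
The proof will go by induction on $n$, following the blueprint of Theorem~\ref{Pavel}. The base case $n=1$ is immediate: the two points see each other. For the inductive step, the plan is to choose a line $h$ that splits $A\cup B$ into two balanced sub-configurations $(A_1,B_1)$ and $(A_2,B_2)$ with $|A_i|=|B_i|$ on each side, recursively obtain non-crossing spanning trees $T_1, T_2$ of $\B(A_i,B_i)$, and glue them using Lemma~\ref{atlem}. Each $T_i$ is bipartite with colour classes $A_i,B_i$ (hence properly $2$-coloured); the convex hulls of $T_1$ and $T_2$ are disjoint since they lie on opposite sides of $h$; and for $n\geq 2$ the union $A\cup B$ is not collinear, because an $AB$-line carrying all $2n\geq 4$ points would violate the hypothesis. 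Lemma~\ref{atlem} therefore yields a bichromatic edge $e$ from $V(T_1)$ to $V(T_2)$ such that $T_1\cup T_2\cup \{e\}$ is non-crossing. Since Lemma~\ref{atlem} is formulated in terms of visible pairs (segments that contain no vertex), no other point of $A\cup B$ lies on $e$, so $e\in\B(A,B)$ and the union is the desired non-crossing spanning tree.

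For $n$ even, a balanced form of Theorem~\ref{hamsand} supplies a line $h$ disjoint from $A\cup B$ with exactly $n/2$ points of each colour in each open half-plane; since $h$ misses every point, visibility inside each side coincides with visibility restricted from $A\cup B$, and the induction applies cleanly. For $n$ odd no line can split both colour classes evenly, so the plan is to pivot on a single point $p\in A\cup B$: a continuous-rotation / intermediate-value argument on the discrepancy between the numbers of $A$- and $B$-points in one half-plane produces a line $h$ through $p$ that meets $A\cup B$ at $p$ only and, after assigning $p$ to one chosen side, yields $(n+1)/2$ points of each colour on that side and $(n-1)/2$ of each on the other. Both new sizes are strictly less than $n$, so the induction recurses, and visibility is preserved on each side because the only point of $A\cup B$ on $h$ is the pivot $p$, which is a vertex of its own side.

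The principal difficulty lies in the odd case: one must argue that the pivot $p$ and the line $h$ can always be chosen so that both sides of the split are non-empty (otherwise the recursion fails to shrink the problem) and so that $h$ contains no second point of $A\cup B$. This is delicate when $p$ is a convex-hull vertex of $A\cup B$, or when an entire colour class lies on the hull, and will likely require choosing $p$ on the hull and slowly tilting a supporting line through it until the desired discrepancy is reached. The remaining hypotheses of Lemma~\ref{atlem}, namely disjoint convex hulls, non-collinear union, and at least one edge on each side, are all straightforward to verify from $n\geq 2$ together with the assumption that no $AB$-line carries more than three points.
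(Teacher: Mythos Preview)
Your overall architecture---induction on $n$, Ham Sandwich to split, recurse on both sides, and glue with Lemma~\ref{atlem}---is exactly the paper's. The gap is in how you produce the splitting line $h$.

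You assert that for even $n$ a ham-sandwich line \emph{disjoint} from $A\cup B$ exists, and that for odd $n$ a line through \emph{exactly one} point with the prescribed $(n\pm1)/2$ counts exists. Neither claim is standard, neither is proved, and your sketch gives no mechanism that uses the hypothesis ``at most three points on any $AB$-line'' beyond ruling out total collinearity. But that hypothesis is essential: take $n=3$ with $a_1,a_2,a_3,b_1,b_2$ in this order on a line $\ell$ and $b_3\notin\ell$; then $b_2$ is isolated in $\B(A,B)$ and no spanning tree exists. So any correct proof must invoke the hypothesis somewhere substantive, and yours does not. In particular the odd-$n$ ``pivot'' argument is a two-constraint (bisect $A\setminus\{p\}$ \emph{and} split $B$ as $(n+1)/2:(n-1)/2$) condition on a one-parameter family of lines through $p$; an intermediate-value argument does not obviously deliver both at once, and if $p$ is collinear with two points of the same colour it can fail outright.

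The paper does not try to manufacture a point-free $h$. It takes the ordinary ham-sandwich line, lets points sit on $h$, assigns them to sides so that $|A^+|=|B^+|=\lceil n/2\rceil$ and $|A^-|=|B^-|=\lfloor n/2\rfloor$, and then studies the left-to-right $\pm$ sign pattern $s_h$ of the assignments along $h$. If $s_h$ alternates at most once, $h$ can be tilted to a line $h'$ strictly separating the two sides and Lemma~\ref{atlem} finishes as you describe. If $s_h$ alternates at least twice, then $h$ carries at least three points, and one can arrange that both colours occur on $h$; now $h$ is an $AB$-line, so the hypothesis forces \emph{exactly} three points on $h$. After a short case check the only unavoidable pattern is $b,a,b$ with the single $A$-point in the middle; here the two recursive trees have no edges along $h$, and one connects them by the visible edge $ab$ along $h$ instead of invoking Lemma~\ref{atlem}. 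This is precisely where the ``at most three'' hypothesis does its work, and it is the step your outline is missing.
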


\begin{proof}
We proceed by induction on $n$. The statement is true for $n=1$. 
Apply Theorem~\ref{hamsand} to find a line $h$ such that each closed half-plane defined by $h$ has at least $\frac{n}{2}$ points from each of $A$ and $B$.
Assume that $h$ is horizontal. 
The idea of the proof is to apply induction on 
each side of $h$ 
to get two spanning trees, and then find an edge joining them together.
In most cases the joining edge will be found by applying Lemma~\ref{atlem}.

We will construct a set $A^+$ containing the points of $A$ that lie above $h$ along with any 
that lie on $h$ that we choose to assign to $A^+$. We will also construct $A^-$, $B^+$ and $B^-$ in a similar fashion.
By the properties of $h$, there exists an 
assignment\footnote{
We need only consider one of the sets, say $A$. Say there are $x$ points above $h$, $y$ points on $h$ and $z$ points below $h$. Then $x+y \geq \ceil{n/2} \geq \floor{n/2} \geq x$ so we can ensure $|A^+| = \ceil{n/2}$. $A^-$ is the complement and therefore has $\floor{n/2}$ points.
}
 of each point in $h \cap (A\cup B)$ to one of these sets such that $|A^+|=|B^+| = \ceil{\frac{n}{2}}$ and $|A^-|=|B^-| = \floor{\frac{n}{2}}$. 

Consider the sequence $s_h$ of signs ($+$ or $-$) given by the chosen assignment of points on $h$ from left to right. 
If $s_h$ is all the same sign, or alternates only once from one sign to the other, then it is possible to perturb $h$ to $h'$ so that $A^+ \cup B^+$ lies strictly above $h'$ and $A^-\cup B^-$ lies strictly below $h'$. 
Thus we may apply induction on 
each side to obtain non-crossing spanning trees in $\B(A^+,B^+)$ and $\B(A^-,B^-)$. Then apply Lemma~\ref{atlem} to find an edge between these two spanning trees, creating a non-crossing spanning tree of $\B(A,B)$.

Otherwise, $s_h$ alternates at least twice (so there are at least three points on $h$). 
This need never happen if there are only points from one set on $h$, since the points required above $h$ can be taken from the left and those required below $h$ from the right. 
Without loss of generality, the only remaining case to consider is that $h$ contains one point from $A$ and two from $B$.
If the two points from $B$ are consecutive on $h$, then without loss of generality $s_h = (+,-,+)$ and the points of $B$ are on the left. In this case the signs of the points from $B$ may be swapped so $s_h$ becomes $(-,+,+)$.
If the point from $A$ lies between the other two points, it is possible that $s_h$ must alternate twice. 
In this case, use induction to find spanning trees in $\B(A^+,B^+)$ and $\B(A^-,B^-)$. These spanning trees have no edges along $h$, so we may add an edge along $h$ to connect them, as shown in Figure~\ref{4linelem}.
\end{proof}

\begin{figure}
\includegraphics{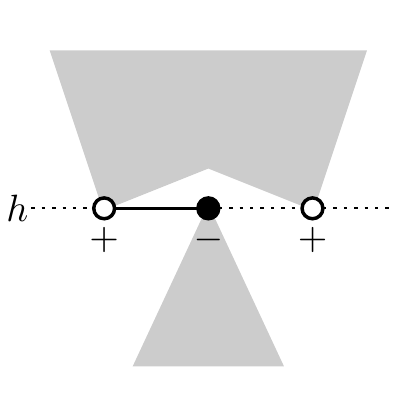}
\caption{\label{4linelem}The only case in which $h$ may not be perturbed to separate the points assigned above $h$ from those assigned below.}
\end{figure}

\begin{theorem}\label{4line}
Let $G$ be a visibility graph with minimum degree $\delta$ and at most four collinear vertices. 
Then $G$ has connectivity at least $\frac{2\delta+1}{3}$.
\end{theorem}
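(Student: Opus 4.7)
The plan is to imitate the proof of Proposition~\ref{3line}, substituting Theorem~\ref{tree} for the $\ell=2$ case of Theorem~\ref{thm:bivis1}. Let $C$ be a minimum vertex cut of $G$ and write $V(G)=A\cup B\cup C$ with $C$ separating the non-empty sets $A$ and $B$, chosen so that $|A|\le|B|$. First I need the preliminary geometric observation that any line meeting both $A$ and $B$ carries at most three points of $A\cup B$: on such a line an $A$-vertex and a $B$-vertex cannot be consecutive (else they would be adjacent in $G$, contradicting separation), so some $C$-vertex lies on it, and with the collinearity bound $\ell=4$ only three of the collinear positions remain for $A\cup B$.

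Theorem~\ref{tree} requires $|A|=|B|$, so I will pick any $B'\subseteq B$ with $|B'|=|A|$. Any $AB'$-line is an $AB$-line, so $A\cup B'$ still satisfies the three-collinear bound, and Theorem~\ref{tree} furnishes a non-crossing spanning tree $T$ of $\B(A,B')$ with $2|A|-1$ edges. For each edge $(a,b')\in T$, the segment $ab'$ has empty interior in $A\cup B'$; but $(a,b')$ is not an edge of $G$ by separation, so its interior contains a vertex of $C\cup(B\setminus B')$.

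The main obstacle is that Observation~\ref{obs} applied to the partition $\{A,B',C\cup(B\setminus B')\}$ only bounds $|C|+|B\setminus B'|$ from below, which together with the degree inequalities is not enough. I will repair this by a replacement trick. If an edge $(a,b')\in T$ has a $B\setminus B'$-witness $b$, then the line through $a,b,b'$ already carries three $A\cup B$-points, so those are the only $A\cup B$-points on the line. I replace $(a,b')$ by the sub-edge $(a,b)$: this lies in $\B(A,B)$ because the interior of $[a,b]$ is empty of $A\cup B$-points (the only other candidate on the line is $b'$, which is outside $[a,b]$), and since $[a,b]\subseteq[a,b']$ the non-crossing property is preserved. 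Distinct edges of $T$ cannot share the same $B\setminus B'$-witness $b$, for otherwise two non-crossing segments would share the interior point $b$, forcing them to lie on a common line carrying at least four $A\cup B$-points. After all replacements I have $2|A|-1$ non-crossing edges of $\B(A,B)$, and Observation~\ref{obs} applied to the original partition $\{A,B,C\}$ yields $|C|\ge 2|A|-1$.

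To conclude, every neighbour of a vertex $v\in A$ lies in $A\cup C$, so $\delta\le|A|+|C|-1$ and hence $|A|\ge\delta+1-|C|$. Substituting into $|C|\ge 2|A|-1$ gives $|C|\ge 2(\delta+1-|C|)-1$, so $3|C|\ge 2\delta+1$, i.e., $|C|\ge\tfrac{2\delta+1}{3}$. The trickiest point is the replacement step, where the precise value $\ell=4$ is essential: any larger collinearity bound would permit an additional $A\cup B$-point on the line through $a,b,b'$ that could obstruct $(a,b)$ in $\B(A,B)$.
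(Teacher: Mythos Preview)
Your proof is correct and follows the paper's approach: take a minimum cut $\{A,B,C\}$, use the degree bound $\delta\le |A|+|C|-1$, trim $B$ to $B'$ with $|B'|=|A|$, apply Theorem~\ref{tree} to get a non-crossing spanning tree with $2|A|-1$ edges, and invoke Observation~\ref{obs} to obtain $|C|\ge 2|A|-1$. The paper's proof is almost verbatim this, but it simply writes ``remove points from $B$ so that $|A|=|B|$'' and applies Observation~\ref{obs} without further comment.

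You are right that there is something to check here, and your replacement trick handles it correctly. A shorter alternative, which is presumably what the paper has in mind, is the following: for each edge $(a,b')$ of the tree $T\subseteq\B(A,B')$, look at the vertex $p\in V(G)$ on the open segment $ab'$ that is closest to $a$. Then $p$ is adjacent to $a$ in $G$, so $p\in A\cup C$ since $C$ separates $A$ from $B$; but $p\notin A$ because $(a,b')$ is an edge of $\B(A,B')$, so $p\in C$. Since the open interiors of the $2|A|-1$ non-crossing edges of $T$ are pairwise disjoint, these $C$-points are distinct, giving $|C|\ge 2|A|-1$ directly without modifying $T$. Your replacement argument reaches the same conclusion by first converting each $T$-edge into a genuine $\B(A,B)$-edge, which is a perfectly valid (if slightly longer) route; the key observation that the three points $a,b,b'$ exhaust the $A\cup B$-points on their line, and that distinct $T$-edges cannot share an interior witness, is exactly what makes it work.
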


\begin{proof}
Let $\{A,B,C\}$ be a partition of $V(G)$ such that $C$ separates $A$ and $B$ and $|A|\leq |B|$.
By considering a point in $A$ we can see that $\delta \leq |A| + |C| -1$. 
If necessary remove points from $B$ so that $|A|=|B|$.
Applying Theorem~\ref{tree} and Observation \ref{obs} yields $|C|\geq 2|A| -1$.
Combining these inequalities yields $|C| \geq \frac{2\delta+1}{3}$.
\end{proof}

It turns out that Proposition~\ref{3line} and Theorem~\ref{4line} are best possible. There are 
visibility graphs with at most three collinear vertices and connectivity $\frac{2\delta+1}{3}$. The construction was discovered by Roger Alperin, Joe Buhler, Adam Chalcraft and Joel Rosenberg in response to a problem posed by Noam Elkies. Elkies communicated their solution to Todd Trimble who published it on his blog \cite{tvblog}. Here we provide a brief description of the construction, but skip over most background details. Note that the original problem  and construction were not described in terms of visibility graphs, so we have translated them into our terminology. 

The construction uses real points on an elliptic curve.
For our purposes a \emph{real elliptic curve} $\C$ is a curve in the real projective plane (which we model as the Euclidean plane with an extra `line at infinity') defined by an equation of the form $y^2 = x^3 +\alpha x +\beta$. The constants $\alpha$ and $\beta$ are chosen so that the discriminant $\Delta = -16(4\alpha^3+27\beta^2)$ is non-zero, which ensures that the curve is non-singular. We define a group operation `$+$' on the points of $\C$ by declaring that $a+b+c=0$ if the line through $a$ and $b$ also intersects $\C$ at $c$, that is, if $a$, $b$ and $c$ are collinear. The identity element $0$ corresponds to the point at infinity in the $\pm y$-direction, so that for instance $a+b+0=0$ if the line through $a$ and $b$ is parallel to the $y$-axis. Furthermore, $a+a+b=0$ if the tangent line at $a$ also intersects $\C$ at $b$. It can be shown that this operation defines an abelian group structure on the points of $\C$.

We will use two facts about real elliptic curves and the group structure on them. Firstly, no line intersects an elliptic curve in more than three points.
Secondly, the group acts continuously: adding a point $e$ which is close to $0$ (i.e.~very far out towards infinity) to another point $a$ results in a point close to $a$ (in terms of distance along $\C$).

\begin{proposition}\label{ellprop} \emph{(Alperin, Buhler, Chalcraft and Rosenberg)}
For infinitely many integers $\delta$, there is a visibility graph with at most three vertices collinear, minimum degree $\delta$, and connectivity $\frac{2\delta+1}{3}$.
\end{proposition}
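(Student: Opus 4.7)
The plan is to realize the required bound by placing all points on a real elliptic curve $\C$, which makes the ``at most three collinear'' condition automatic by B\'ezout's theorem. Pick a secant line of $\C$ meeting it in three distinct real points $a, m^*, b$ in this order along the line, so that $m^*$ lies strictly between $a$ and $b$ and $a + b + m^* = 0$ in the group. For each integer $k \geq 1$, choose an element $d \in \C$ close to the identity and define three ``clusters''
\begin{align*}
L &= \{a + id : 0 \leq i \leq k\}, \\
R &= \{b + jd : 0 \leq j \leq k\}, \\
M &= \{m^* - sd : 0 \leq s \leq 2k\},
\end{align*}
with typical points $\ell_i, r_j, m_s$. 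Let $G_k$ be the visibility graph of $P_k := L \cup R \cup M$. The group identity $\ell_i + r_j + m_{i+j} = 0$ means every line from $L$ to $R$ already contains a point of $M$.

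I would then verify the claim in four steps. First, $P_k \subset \C$ together with B\'ezout gives at most three collinear. Second, $M$ is a vertex cut separating $L$ from $R$: as $d \to 0$ the collinear triple $(\ell_i, r_j, m_{i+j})$ degenerates to $(a, b, m^*)$, and since $m^*$ is strictly between $a$ and $b$, continuity ensures that $m_{i+j}$ lies strictly between $\ell_i$ and $r_j$ for $d$ small enough, and hence blocks visibility. Thus $\kappa(G_k) \leq |M| = 2k+1$. Third, each $\ell_i$ has degree exactly $3k+1$: the third intersection of the line $\ell_i\ell_j$ with $\C$ is $-(2a + (i+j)d)$, which for generic $a$ is far from all three clusters, so $\ell_i$ sees the other $k$ points of $L$; and the third intersection of the line $\ell_i m_s$ with $\C$ is $b + (s-i)d$, which is either not in $P_k$ or equals $r_{s-i} \in R$. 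In the latter case the limiting line is the line through $a$ and $m^*$, on which $b$ lies on the far side of $m^*$ from $a$, so by continuity $r_{s-i}$ lies on the far side of $m_s$ from $\ell_i$ and does not block the segment $\ell_i m_s$. Hence $\ell_i$ sees all $2k+1$ points of $M$, giving $\deg(\ell_i) = k + (2k+1) = 3k+1$. By symmetry $\deg(r_j) = 3k+1$, and each $m_s$ clearly has degree at least $3k+1$, so $\delta(G_k) = 3k+1$. Fourth, Proposition~\ref{3line} applied to $G_k$ gives $\kappa(G_k) \geq \frac{2\delta+1}{3} = 2k+1$, matching the upper bound; hence $\kappa(G_k) = \frac{2\delta+1}{3}$, and letting $k$ range over $\mathbb{N}$ produces an infinite family of values of $\delta$.

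The hard part will be the betweenness analysis, since the group law encodes collinearity but not the order of points along the line. The entire construction hinges on choosing $a, b$ so that $m^* = -(a+b)$ lies strictly between them on the line $ab$; such secants are abundant on any real elliptic curve with two real components, and can be exhibited concretely on, for example, $y^2 = x^3 - x$. All subsequent betweenness claims for the perturbed triples then follow by continuity as $d \to 0$. A short list of open genericity conditions on $(a, b)$ (for instance, that $-2a$, $-2b$, $-(a+m^*)$, and $-(b+m^*)$ all avoid the three clusters) rules out unintended collinearities among the $4k+3$ points, and each such condition holds on an open dense subset of $\C \times \C$; making $d$ small enough then completes the argument.
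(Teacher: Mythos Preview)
Your proof is correct and follows essentially the same construction as the paper: place three clusters near three collinear points $a$, $b$, $-(a+b)$ on a real elliptic curve, use the group law to make the middle cluster block every $L$--$R$ segment, and invoke Proposition~\ref{3line} for the matching lower bound on $\kappa$. Your handling of the betweenness and genericity issues is somewhat more explicit than the paper's, but the approach and the resulting parameters ($|L|=|R|=k+1$, $|M|=2k+1$, $\delta=3k+1$) are identical up to reindexing.
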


\begin{proof}
Begin by choosing three non-zero collinear points $a$, $b$ and $c$ on a real elliptic curve $\C$, such that $c$ lies between $a$ and $b$.
Then choose a point $e$ very close to $0$. Now define
\begin{align*}
A:=& \{ a + ie : 0\leq i \leq m-1 \} \\
B:=& \{ b + je : 0\leq j \leq m-1 \} \\
C:=& \{ -(a+b + ke) : 0 \leq k \leq 2m-2\}.
\end{align*}
Let $G$ be the visibility graph of $A \cup B \cup C$. Since the points are all on $\C$, $G$ has at most three vertices collinear.
Observe that the points $a+ie$ and $b+je$ are collinear with the point $-(a+b+ (i+j)e)$.
Since $e$ was chosen to be very close to $0$, by continuity the set $A$ is contained in a small neighbourhood of $a$, and similarly for $B$ and $C$. Therefore, the point from $C$ is the middle point in each collinear triple, and so $C$ is a vertex cut in $G$, separating $A$ and $B$.

By choosing $a$, $b$ and $c$ away from any points of inflection, we can guarantee that there are no further collinear triples among the sets $A$, $B$ or $C$.
Thus a point in $A$ sees all other points in $A\cup C$, 
a point in $B$ sees all other points in $B\cup C$,
and a point in $C$ sees all other points.
Therefore the minimum degree of $G$ is $\delta = 3m-2$, attained by the vertices in $A\cup B$.
Hence (also using Proposition~\ref{3line}) the connectivity of $G$ is $|C|=2m-1 = \frac{2\delta+1}{3}$.
\end{proof}

\begin{figure}[!h]
\includegraphics{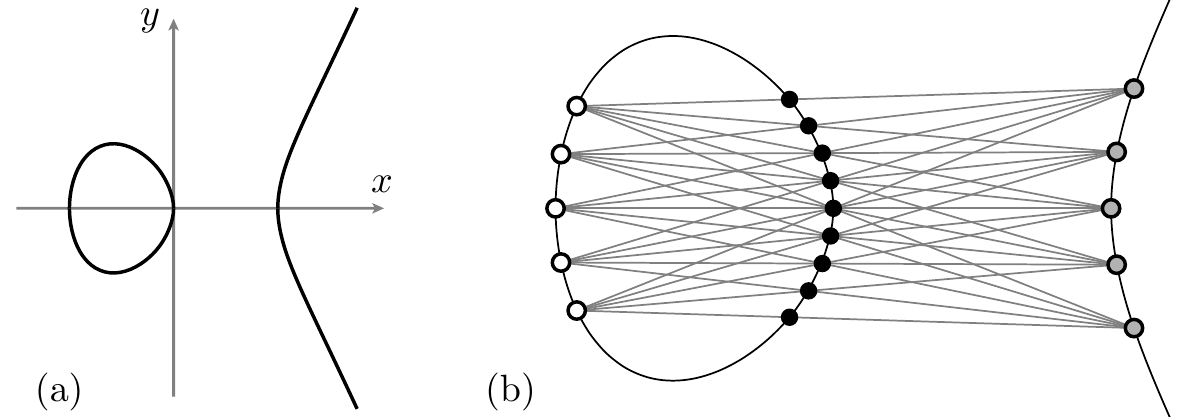}
\caption{\label{elliptic}(a) The elliptic curve $y^2 = x^3 -x$. (b) The black points separate the white points from the grey points.}
\end{figure}

In Figure~\ref{elliptic} we have chosen $\C$ to be the curve $y^2 = x^3 - x$ and the points $a$, $b$ and $c$ on the $x$-axis. We have taken advantage of the symmetry about the $x$-axis to choose $A = \{ a \pm ie \}$ (and similarly for $B$ and $C$), which is slightly different to the construction outlined in Proposition~\ref{ellprop}.

We close our discussion of the connectivity of visibility graphs with the following conjecture.

\begin{conjecture}
Every visibility graph with minimum degree $\delta$ has connectivity at least $\frac{2\delta+1}{3}$.
\end{conjecture}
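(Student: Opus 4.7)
The plan is to follow the template established by Theorem~\ref{4line}, reducing the conjecture to a strengthened bivisibility-graph statement that dispenses with the bound on collinearity. Specifically, I would aim to prove the following: if $A$ and $B$ are disjoint finite point sets in the plane with $|A|=|B|=n$ and $A\cup B$ not collinear, then $\B(A,B)$ contains a non-crossing spanning tree (hence $2n-1$ pairwise non-crossing edges). Call this statement $(\star)$. Granting $(\star)$, the conjecture follows exactly as in Theorem~\ref{4line}: take a minimum vertex cut $\{A,B,C\}$ with $|A|\leq|B|$, note $\delta\leq|A|+|C|-1$, reduce $B$ to size $|A|$ while preserving non-collinearity of $A\cup B$ (the residual cases $|A|=1$ and $A\cup B$ collinear being handled directly), and combine Observation~\ref{obs} with $(\star)$ to obtain $|C|\geq 2|A|-1$; the two inequalities together give $|C|\geq\frac{2\delta+1}{3}$. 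The sharpness of Proposition~\ref{ellprop}, in which the restriction of $\B(A,B)$ to $A\cup B$ alone is complete bipartite and admits an obvious zig-zag non-crossing spanning tree on $2n-1$ edges, is evidence that $(\star)$ is the right target.

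To prove $(\star)$, I would proceed by induction on $n$, imitating the Ham Sandwich strategy of Theorem~\ref{tree}. Invoke Theorem~\ref{hamsand} to produce a line $h$ that simultaneously balances $A$ and $B$; assign the points of $h\cap(A\cup B)$ between two halves of sizes $\lceil n/2\rceil$ and $\lfloor n/2\rfloor$; apply the inductive hypothesis on each half to obtain non-crossing spanning trees of $\B(A^+,B^+)$ and $\B(A^-,B^-)$; and merge these into a non-crossing spanning tree of $\B(A,B)$, either by Lemma~\ref{atlem} (when a perturbation of $h$ strictly separates the halves) or by adding an explicit edge along $h$ (otherwise). The generic case goes through just as in Theorem~\ref{tree}; the new difficulty lies entirely in the points of $h\cap(A\cup B)$.

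The main obstacle is the failure of the three-collinearity bound that drives the analysis of Theorem~\ref{tree}: without it, arbitrarily many points of $A\cup B$ may lie on $h$, and one loses control over how the two halves interact along $h$. I expect a proof will need two ingredients. First, a combinatorial rule for assigning the points of $h\cap(A\cup B)$ to the two sides so that induction delivers trees of the correct cardinalities, at most one tree-edge lies along $h$, and no crossing is introduced during the merge. Second, a strengthening of Lemma~\ref{atlem} that allows the two properly coloured non-crossing graphs to share vertices on their common boundary line while still guaranteeing a bichromatic joining edge. A plausible alternative is to bypass Ham Sandwich altogether and imitate the radial decomposition of Theorem~\ref{thm:bivis1}: sweep rays from a convex-hull vertex of $A\cup B$ and build the spanning tree incrementally, using a finer witness argument to secure $2n-1$ edges rather than the weaker $\frac{n-1}{\ell-1}$ obtained there. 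In either approach, the configurations I expect to be hardest are those in which a single line carries most of $A\cup B$, echoing Case~(i) of Theorem~\ref{Pavel}.
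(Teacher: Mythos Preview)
This statement is a \emph{conjecture} in the paper, not a theorem: the authors explicitly leave it open, so there is no proof to compare against. What you have written is a programme, not a proof, and the key step~$(\star)$ you propose is actually \emph{false}.

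Take
\[
A=\{(0,0),(1,0),(0,1),(-1,0),(0,-1)\},\qquad
B=\{(2,0),(3,0),(0,2),(-2,0),(0,-2)\}.
\]
Then $|A|=|B|=5$ and $A\cup B$ is not collinear, yet the origin is an isolated vertex of $\B(A,B)$: every segment from $(0,0)$ to a point of $B$ passes first through a point of $A$. Hence $\B(A,B)$ has no spanning tree at all, and your inductive scheme---which at each level requires spanning trees on both halves so that Lemma~\ref{atlem} can stitch them together---cannot get started. The obstruction is not a boundary issue on $h$; it is intrinsic to bivisibility graphs once arbitrary collinearities are allowed. Indeed, the paper itself proves (final section) only that $\B(A,B)$ has at most one non-trivial component, explicitly allowing isolated vertices.

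What would suffice for the conjecture is the weaker assertion that $\B(A,B)$ contains $2n-1$ pairwise non-crossing edges, with no spanning requirement; this is exactly the gap between the $n+1$ of Theorem~\ref{Pavel} (which yields $\kappa\geq\frac{\delta}{2}+1$) and the $2n-1$ needed for $\kappa\geq\frac{2\delta+1}{3}$. In the example above one can in fact find nine non-crossing edges despite the isolated vertex, so the weaker target is not immediately ruled out. But your Ham-Sandwich induction is tailored to spanning trees: once the two halves are merely dense non-crossing subgraphs rather than trees, Lemma~\ref{atlem} no longer guarantees a single extra edge is enough, and the bookkeeping for edges along $h$ (which already defeats a clean argument beyond $\ell=4$ in Theorem~\ref{tree}) becomes substantially harder. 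In short, the reduction is sound but the lemma you reduce to is false as stated, and the natural weakening lies beyond the techniques of the paper.
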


\section{Connectedness of Bivisibility Graphs}

Visibility graphs are always connected, but bivisibility graphs may have isolated vertices. However, we now prove that non-collinear bivisibility graphs have at most one component that is not an isolated vertex.

\begin{lemma}\label{trilem}
Let $A$ and $B$ be disjoint point sets such that $A\cup B$ is not collinear.
Let $T$ be a triangle with vertices $a \in A$, $b \in B$ and $c \in A \cup B$. Then $a$ or $b$ has a neighbour in $\B(A,B)$ lying in $T \setminus ab$.
\end{lemma}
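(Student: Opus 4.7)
The plan is to proceed by induction on $|(A\cup B)\cap T|$, which is at least $3$ since $a,b,c$ all lie in $A\cup B$ and in the closed triangle $T$. By the $A$-$B$ symmetry of the statement I will assume $c\in A$; the case $c\in B$ is identical with the roles of $a$ and $b$ interchanged, and with side $ac$ in place of $bc$ throughout.

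For the base case $|(A\cup B)\cap T|=3$, the only points of $A\cup B$ in $T$ are $a,b,c$, so the side $bc$ contains no other point of $A\cup B$. Hence $c$ is visible from $b$ with respect to $A\cup B$, meaning $c$ is a neighbour of $b$ in $\B(A,B)$, and the non-degeneracy of $T$ gives $c\in T\setminus ab$.

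For the inductive step, first check whether $c$ is already a neighbour of $b$ in $\B(A,B)$; if so, take $c$. Otherwise some point $q\in A\cup B$ lies strictly between $b$ and $c$ on side $bc$. I would then apply the induction hypothesis to the sub-triangle $T':=abq$, whose labelled vertices satisfy $a\in A$, $b\in B$, $q\in A\cup B$, and which is non-degenerate because $q\notin ab$. The segment $aq$ splits $T$ into $T'$ and the triangle $aqc$, and $c$ lies in the latter but not in $T'$, so $|(A\cup B)\cap T'|<|(A\cup B)\cap T|$. The induction hypothesis then produces a neighbour of $a$ or $b$ in $\B(A,B)$ lying in $T'\setminus ab\subseteq T\setminus ab$, as required.

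The main subtlety is the choice of sub-triangle. Replacing $b$ by $q$ and passing to the triangle $aqc$ is also tempting, but if $q\in B$ then the $B$-labelled vertex becomes $q$, and the inductive conclusion would give a neighbour of $q$ rather than of the original $b$; if $q\in A$ then $aqc$ has no $B$-vertex at all and the hypothesis fails outright. Keeping both $a$ and $b$ as the $A$- and $B$-labelled vertices of the sub-triangle (with $q$ playing the role of the unrestricted vertex $c$) ensures that the neighbour produced by induction is adjacent to the original $a$ or $b$, which is exactly what the statement demands.
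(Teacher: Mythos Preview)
Your induction argument is correct: the base case is fine, and in the inductive step the sub-triangle $T'=abq$ is indeed non-degenerate (if $q$ lay on line $ab$ then lines $ab$ and $bc$ would coincide, making $T$ degenerate), satisfies $T'\subseteq T$, and excludes $c$ (since $c$ lies on segment $aq$ only if $a,q,c$ are collinear, forcing $q=c$), so the induction parameter strictly decreases. Your care in keeping $a$ and $b$ as the labelled vertices is exactly what is needed.

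The paper's proof is different and considerably shorter. It simply takes the point $p\in (A\cup B)\cap T$ not on line $ab$ that is \emph{closest} to line $ab$; such a point exists because $c$ is one candidate. Any point of $A\cup B$ on the open segment $pa$ would lie in $T$, would not lie on line $ab$ (else $p$ would too), and would be strictly closer to line $ab$ than $p$, contradicting minimality; likewise for $pb$. Hence $p$ sees both $a$ and $b$, and being in $A$ or $B$ it is adjacent to one of them in $\B(A,B)$. This extremal argument avoids induction entirely and gives a slightly stronger conclusion (the witness sees both $a$ and $b$), at the cost of requiring one to spot the right extremal quantity. Your inductive approach is more mechanical but equally valid, and your discussion of why the ``wrong'' sub-triangle fails is a nice piece of proof hygiene.
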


\begin{proof}
There is at least one point in $T$ not lying on the line $ab$. The one closest to $ab$ sees both $a$ and $b$, and is therefore adjacent to one of them.
\end{proof}

\begin{theorem}
Let $A$ and $B$ be disjoint point sets such that $A\cup B$ is not collinear.
Then $\B(A,B)$ has at most one component
that is not an isolated vertex.
\end{theorem}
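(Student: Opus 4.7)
The plan is to argue by contradiction: assume $\B(A,B)$ has two non-trivial components $C$ and $C'$. First, using the non-collinearity of $A\cup B$, I would argue there exist an edge $ab\in E(C)$ (with $a\in A$, $b\in B$) and a vertex $v\in V(C')$ with $a,b,v$ not collinear. If no such triple existed, then every edge of $C$ and every vertex of $C'$ would lie on a common line $\ell$; non-collinearity of $A\cup B$ then yields a point of $A\cup B$ off $\ell$, and the off-$\ell$ point closest to $\ell$ has an unobstructed line of sight to an opposite-coloured vertex of $C$ on $\ell$, so it is non-isolated and lies in a third non-trivial component $\neq C,C'$ that we may substitute for $C'$. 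Among all such valid triples, pick $(ab,v)$ minimizing $|T\cap(A\cup B)|$ where $T:=\conv\{a,b,v\}$.

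Now apply Lemma~\ref{trilem} to $T$ to produce a point $x\in(T\cap(A\cup B))\setminus\{a,b\}$ that is adjacent in $\B(A,B)$ to $a$ or $b$; hence $x\in V(C)$. If $x=v$, then $v\in V(C)\cap V(C')$, a direct contradiction. Otherwise, by symmetry assume $x$ is adjacent to $a$, so $x\in B$ and $ax\in E(C)$. If $a,x,v$ are not collinear, then $T':=\conv\{a,x,v\}$ is a non-degenerate subtriangle of $T$ that excludes $b$, giving $|T'\cap(A\cup B)|<|T\cap(A\cup B)|$, contradicting the minimality of $T$.

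The main obstacle is the remaining degenerate case, where $x$ lies on the segment $av$. I would handle it by inspecting the points of $A\cup B$ on the sub-segment $xv$: let $y$ be the point of $A\cup B$ on the open segment $xv$ closest to $v$, or $y=v$ if none exists. Several sub-cases resolve immediately: if $v\in A$ and $x$ sees $v$, then $xv$ is a bi-edge placing $v\in V(C)$, a contradiction; if $v\in A$ and $y\in B$, then $yv$ is a bi-edge, so $y\in V(C')$, and the triple $(ab,y)$ gives a strictly smaller non-degenerate triangle, contradicting minimality; the cases with $v\in B$ are symmetric. The hardest residual sub-case is when the colour sequence along $xv$ breaks in a way that yields no such bi-edge into $V(C')$; here one iterates along segment $av$ via the chain of alternating-colour bi-edges starting at $x$ to find a vertex $w\in V(C)$ strictly closer to $v$, and then uses an off-line neighbour of $w$ (which must exist because $C$ is non-trivial and, by the non-collinearity of $A\cup B$, cannot be entirely contained in the line $av$) to produce a non-degenerate sub-triangle of $T$ strictly smaller than $T$, again contradicting minimality. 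The hard part will be verifying carefully that this chain-plus-off-line-neighbour construction covers every remaining colour configuration.
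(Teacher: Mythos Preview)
Your overall strategy—contradiction via a minimal-triangle argument together with Lemma~\ref{trilem}—is the same as the paper's. The non-degenerate branch is fine: if $a,x,v$ are not collinear then $\conv(a,x,v)$ is a proper subtriangle of $T$ missing $b$, so the point-count drops.

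The genuine gap is in your collinear sub-case. When $x\in B$ lands on the segment $av$, your fix is to walk along $av$ to some $w\in V(C)$ nearer to $v$ and then invoke an ``off-line neighbour'' $w'$ of $w$. Two problems: first, the justification ``$C$ is non-trivial and cannot be entirely contained in line $av$'' only guarantees that \emph{some} vertex of $C$ lies off the line (indeed $b$ does), not that your particular $w$ has an off-line neighbour; second, even if such a $w'$ exists, there is no reason for $w'$ to lie inside $T$, so $\conv(w,w',v)$ need not be a subtriangle of $T$ and your minimality invariant is not violated. The case analysis on colours along $xv$ that you sketch does not repair this, because the obstruction is geometric (location of the new edge relative to $T$), not combinatorial (colour pattern on the segment).

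The paper sidesteps this entirely by a different choice of minimisation: it takes a pair of \emph{edges} $ab$, $a'b'$ (one from each component) minimising the \emph{area} of $\conv(a,b,a',b')$, rather than an edge–vertex pair minimising a point count. Since $b$ is a vertex of the hull and the new point $w$ produced by Lemma~\ref{trilem} is not, the new hull $\conv(a,w,a',b')$ has strictly smaller area regardless of where $w$ sits inside the old triangle—so the collinear sub-case simply never arises in the inductive step. The only degenerate configuration left is when the two edges themselves are collinear, and that is disposed of in one line via the closest off-line point. Adopting pairs of edges and area instead of edge–vertex triples and point count would let you delete the entire last paragraph of your argument.
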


\begin{proof}
Assume for the sake of contradiction that $\B(A,B)$ has two components with one or more edges. 
Choose a pair of edges $ab$ and $a'b'$, one from each component, such that the area of $C :=\conv(a,b,a',b')$ is minimal.
If $ab$ and $a'b'$ lie on one line, they are joined by a path through the closest point to that line, a contradiction. 
If they do not lie on a line then both ends of at least one of the edges are vertices of $C$. Assume this
edge is $ab$ and let $v$ be another vertex of $C$ ($v$ is either $a'$ or $b'$). 
Then by Lemma~\ref{trilem}, $a$ or $b$ has a neighbour $w$ in $\triangle abv \setminus ab$. Without loss of generality, $w$ is a neighbour of $a$. If $w = v$, then $ab$ and $a'b'$ are in the same component, a contradiction. If $w\neq v$, then there is a pair of edges with a smaller convex hull, namely $a'b'$ and $aw$, because $w \in C$, but $w$ is not a vertex of $C$. This contradicts the assumption that $C$ was minimal. 
\end{proof}

\begin{corollary}
A non-collinear bivisibility graph is connected if and only if it has no isolated vertices.
\end{corollary}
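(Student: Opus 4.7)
The plan is to derive the corollary as an almost immediate consequence of the preceding theorem, using only the observation that the non-collinearity hypothesis forces $|A\cup B|\geq 3$. Since there must exist three points of $A\cup B$ not lying on a common line, the vertex set of $\B(A,B)$ has at least three elements.

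For the forward implication, I will argue that if $\B(A,B)$ is connected then, because the graph has more than one vertex, every vertex has at least one neighbour, so no vertex is isolated. This direction requires no real content beyond the trivial fact that a connected graph on at least two vertices has no isolated vertex.

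For the reverse implication, suppose $\B(A,B)$ has no isolated vertices. Every connected component must then contain at least one edge. By the preceding theorem, at most one component of $\B(A,B)$ fails to be a single isolated vertex; combined with the assumption, this leaves only one component, so $\B(A,B)$ is connected.

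There is no substantive obstacle here, since all the geometric work has been done in the preceding theorem. The only mild care required is to record why the non-collinearity hypothesis is used, namely to ensure the graph has enough vertices that ``connected'' and ``no isolated vertex'' are genuinely the conditions being compared rather than vacuously identified on a one-vertex graph.
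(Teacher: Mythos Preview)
Your argument is correct and is exactly the intended deduction from the preceding theorem; the paper in fact states the corollary without proof, since the two implications are immediate once one knows that at most one component can contain an edge. Your care in noting that non-collinearity gives at least three vertices (so connectedness genuinely entails the absence of isolated vertices) is appropriate, though this point is left implicit in the paper.
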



\bibliographystyle{siam}
\bibliography{references}


\end{document}